
\documentclass[11pt,a4paper,twoside]{article}

\usepackage[T1]{fontenc}
\usepackage[cp1250]{inputenc} 
\usepackage{times}

\usepackage[leqno]{amsmath} 
\usepackage{amsthm,amsfonts} 
\usepackage{amssymb}
\usepackage{bm} 

\input xy  \xyoption{all}

\usepackage[usenames,dvipsnames]{xcolor}
\usepackage{enumerate} 
\usepackage{hyperref} 

\widowpenalty=10000 \clubpenalty=10000 \raggedbottom\oddsidemargin
.1in \evensidemargin .1in \marginparwidth =1in \textwidth 15.6cm
\topmargin 15mm\textheight23cm \advance\topmargin by
-\headheight\advance\topmargin by -\headsep

\pagestyle{myheadings}
\markboth{J. Grabowski, M. J\'o\'zwikowski \& M. Rotkiewicz}{Duality for graded bundles}

\newcommand{\Gdual}[1]{{{#1}^{\ast}_{_\GG}}} 
\newcommand{\Gdualfunctor}{\ast_{_\GG}}
\newcommand{\algdual}[1]{{{#1}^{\star}_{_\GG}}}
\newcommand{\algdualfunctor}{\star_{_\GG}}

\newcommand{\kGdual}[2]{{{#1}^{\ast[#2]}_{_\GG}}} 
\newcommand{\kGdualfunctor}[1]{\ast_{_\GG}[#1]}
\newcommand{\kalgdual}[2]{{{#1}^{\star[#2]}_{_\GG}}}
\newcommand{\kalgdualfunctor}[1]{\star_{_\GG}[#1]}

\newcommand{\odualfunctor}{\circ_{_\GG}}
\newcommand{\kodualfunctor}[1]{\circ_{_\GG}[#1]}


\definecolor{ao}{rgb}{0.0, 0.5, 0.0}

\newcommand{\newMR}[1]{{\color{ao} #1}}

\newcommand{\A}{\mathbb{A}\,} 
\newcommand{\dd}{{\bm d}}  
\newcommand{\ddd}{{\underline{\bm d}}}

\newcommand{\polyA}{\cA}

\newcommand{\und}[1]{\underline{#1}}

\newcommand{\GB}{\mathpzc{GB}} 
\newcommand{\FGWAB}{\mathpzc{FGWAB}} 
\newcommand{\FGWCB}{\mathpzc{FGWCB}} 
\newcommand{\VB}{\mathpzc{VB}} 
\newcommand{\GPB}{\mathpzc{GPAB}}  
\newcommand{\GPCB}{\mathpzc{GPCB}}  

\DeclareMathAlphabet{\mathpzc}{OT1}{pzc}{m}{it}

\newcommand{\Sec}{\operatorname{Sec}} 
\newcommand{\PSec}{\operatorname{PSec}} 
\newcommand{\Sym}{\operatorname{Sym}} 
\newcommand{\V}{\mathrm{V}} 
\newcommand{\cF}{\mathcal{F}}
\newcommand{\pa}{\partial} 
\newcommand{\eps}{\varepsilon} 


\newcommand{\Homgr}{\Hom_{gr-alg}} 

\newcommand{\lra}{\longrightarrow}
\newcommand{\ra}{\rightarrow}

\def\<#1>{\big\langle #1\big\rangle}
\def\(#1){\left( #1\right)}
\def\relto{\rightarrow\!\!\vartriangleright}



\newcommand{\ideal}[1]{\langle \eps^{#1+1} \rangle} 

\newcommand{\thh}[1]{#1^{\mathrm{th}}} 

\newcommand{\R}{\mathbb{R}}

\newcommand{\Zet}{\mathbb{Z}}
\newcommand{\N}{\mathbb{N}}
\newcommand{\Sgroup}{\mathbb{S}}

\newcommand{\GG}{\mathcal{G}}


\newcommand{\T}{{\sT}} 

\newcommand{\ev}{\operatorname{ev}}
 %



\newdir{|>}{!/4,5pt/@{|}*:(1,-.2)@^{>}*:(1,+.2)@_{>}}
\def\relto{\rightarrow\!\!\vartriangleright} 

\def\<#1>{\left\langle #1\right\rangle}
\def\(#1){\left( #1\right)}

\font\black=cmbx10 \font\sblack=cmbx7 \font\ssblack=cmbx5 \font\blackital=cmmib10  \skewchar\blackital='177
\font\sblackital=cmmib7 \skewchar\sblackital='177 \font\ssblackital=cmmib5 \skewchar\ssblackital='177
\font\anss=cmss12
\font\sanss=cmss10 \font\ssanss=cmss8 
\font\sssanss=cmss8 scaled 600 \font\blackboard=msbm10 \font\sblackboard=msbm7 \font\ssblackboard=msbm5
\font\caligr=eusm10 \font\scaligr=eusm7 \font\sscaligr=eusm5  \font\fraktur=eufm10
\font\sfraktur=eufm7 \font\ssfraktur=eufm5 
\font\bsymb=cmsy10 scaled\magstep2
\def\all#1{\setbox0=\hbox{\lower1.5pt\hbox{\bsymb
       \char"38}}\setbox1=\hbox{$_{#1}$} \box0\lower2pt\box1\;}
\def\exi#1{\setbox0=\hbox{\lower1.5pt\hbox{\bsymb \char"39}}
       \setbox1=\hbox{$_{#1}$} \box0\lower2pt\box1\;}

\def\tx#1{{\fam0\relax#1}}

\newfam\bifam
\textfont\bifam=\blackital \scriptfont\bifam=\sblackital \scriptscriptfont\bifam=\ssblackital

\newfam\blfam
\textfont\blfam=\black \scriptfont\blfam=\sblack \scriptscriptfont\blfam=\ssblack

\newfam\bbfam
\textfont\bbfam=\blackboard \scriptfont\bbfam=\sblackboard \scriptscriptfont\bbfam=\ssblackboard

\newfam\bigsfam
\textfont\bigsfam=\anss

\newfam\ssfam
\textfont\ssfam=\sanss \scriptfont\ssfam=\ssanss \scriptscriptfont\ssfam=\sssanss
\def\sss#1{{\fam\ssfam\relax#1}}

\newfam\clfam
\textfont\clfam=\caligr \scriptfont\clfam=\scaligr \scriptscriptfont\clfam=\sscaligr

\newfam\frfam
\textfont\frfam=\fraktur \scriptfont\frfam=\sfraktur \scriptscriptfont\frfam=\ssfraktur

\def\hpb#1{\setbox0=\hbox{${#1}$}
    \copy0 \kern-\wd0 \kern.2pt \box0}
\def\vpb#1{\setbox0=\hbox{${#1}$}
    \copy0 \kern-\wd0 \raise.08pt \box0}

\def\pmb#1{\setbox0\hbox{${#1}$} \copy0 \kern-\wd0 \kern.2pt \box0}
\def\pmbb#1{\setbox0\hbox{${#1}$} \copy0 \kern-\wd0
      \kern.2pt \copy0 \kern-\wd0 \kern.2pt \box0}
\def\pmbbb#1{\setbox0\hbox{${#1}$} \copy0 \kern-\wd0
      \kern.2pt \copy0 \kern-\wd0 \kern.2pt
    \copy0 \kern-\wd0 \kern.2pt \box0}
\def\pmxb#1{\setbox0\hbox{${#1}$} \copy0 \kern-\wd0
      \kern.2pt \copy0 \kern-\wd0 \kern.2pt
      \copy0 \kern-\wd0 \kern.2pt \copy0 \kern-\wd0 \kern.2pt \box0}
\def\pmxbb#1{\setbox0\hbox{${#1}$} \copy0 \kern-\wd0 \kern.2pt
      \copy0 \kern-\wd0 \kern.2pt
      \copy0 \kern-\wd0 \kern.2pt \copy0 \kern-\wd0 \kern.2pt
      \copy0 \kern-\wd0 \kern.2pt \box0}


\mathchardef\za="710B  
\mathchardef\zb="710C  
\mathchardef\zg="710D  
\mathchardef\zd="710E  
\mathchardef\zve="710F 
\mathchardef\zz="7110  
\mathchardef\zh="7111  
\mathchardef\zvy="7112 
\mathchardef\zi="7113  
\mathchardef\zk="7114  
\mathchardef\zl="7115  
\mathchardef\zm="7116  
\mathchardef\zn="7117  
\mathchardef\zx="7118  
\mathchardef\zp="7119  
\mathchardef\zr="711A  
\mathchardef\zs="711B  
\mathchardef\zt="711C  
\mathchardef\zu="711D  
\mathchardef\zvf="711E 
\mathchardef\zq="711F  
\mathchardef\zc="7120  
\mathchardef\zw="7121  
\mathchardef\ze="7122  
\mathchardef\zy="7123  
\mathchardef\zf="7124  
\mathchardef\zvr="7125 
\mathchardef\zvs="7126 
\mathchardef\zf="7127  
\mathchardef\zG="7000  
\mathchardef\zD="7001  
\mathchardef\zY="7002  
\mathchardef\zL="7003  
\mathchardef\zX="7004  
\mathchardef\zP="7005  
\mathchardef\zS="7006  
\mathchardef\zU="7007  
\mathchardef\zF="7008  
\mathchardef\zW="700A  
\mathchardef\zC="7009  

\newcommand{\be}{\begin{equation}}
\newcommand{\ee}{\end{equation}}
\newcommand{\bea}{\begin{eqnarray}}
\newcommand{\eea}{\end{eqnarray}}

\def\*{{\textstyle *}}

\newcommand{\we}{\wedge}

\newcommand{\ot}{\otimes}
\newcommand{\s}{{\textstyle *}}

\newcommand{\ti}{\times}
\newcommand{\cA}{{\cal A}}
\newcommand{\cI}{{\cal I}}
\newcommand{\cC}{{\cal C}}

\def\Hom{\sss{Hom}}

\def\bA{\mathbf{A}}

\def\ul{\underline}
\def\Sec{\sss{Sec}}



\def\sT{{\sss T}}

\def\xi{\tx{i}}


\def\s*{{\scriptstyle *}}

\def\ul{\underline}

\newcommand{\beas}{\begin{eqnarray*}}
\newcommand{\eeas}{\end{eqnarray*}}
\newdir{|>}{%
!/4.5pt/@{|}*:(1,-.2)@^{>}*:(1,+.2)@_{>}}
\newdir{ (}{{}*!/-5pt/@^{(}}

\def\mbA{{\mathbb{A}}}
\newcommand{\kmbA}[1]{\mathbb{A}^{[ #1]}}

\def\0{{\underline{0}}}

\DeclareMathOperator{\w}{\textbf{w}}

\numberwithin{equation}{section}

\newtheorem{theorem}{Theorem}[section]
\newtheorem{corollary}[theorem]{Corollary}
\newtheorem{lemma}[theorem]{Lemma}
\newtheorem{proposition}[theorem]{Proposition}

\theoremstyle{definition}
\newtheorem{definition}[theorem]{Definition}
\newtheorem{remark}[theorem]{Remark}
\newtheorem{example}[theorem]{Example}


\begin{document}

\title {Duality for graded manifolds\footnote{Research founded by the Polish National Science Centre grant
under the contract number DEC-2012/06/A/ST1/00256.}}
\author{Janusz Grabowski, Micha\l\ J\'{o}\'{z}wikowski \\[0.5cm]
\emph{Institute of Mathematics}\\
\emph{Polish Academy of Sciences}\\[0.5cm]
Miko\l aj Rotkiewicz \\[0.5cm]
\emph{Faculty of Mathematics, Informatics and Mechanics}\\
\emph{University of Warsaw}
}
\date{\today}

\maketitle
\paragraph*{Keywords:}{graded bundle, duality, homogneity structure, N-manifold, Zakrzewski morphism}
\paragraph*{MSC 2010:} {53C15; 57S25, 55R10, 58A50}


\begin{abstract}
 We study the notion of duality in the context of graded manifolds. For graded bundles, somehow like in the case of Gelfand representation and the duality: points vs. functions, we obtain natural dual objects which belongs to a different category than the initial ones, namely graded polynomial (co)algebra bundles and free graded Weil (co)algebra bundles. Our results are then applied to obtain elegant characterizations of double vector bundles and graded bundles of degree 2.  All these results have their supergeometric counterparts. For instance, we give a simple proof of a nice characterisation of $N$-manifolds of degree 2, announced in the literature.

\end{abstract}

\section{Introduction}

\subsection{Duality for vector spaces}
In linear algebra the notion of duality can be understood as a functor between the category $\mathpzc{Vect}$ of  finite-dimensional vector spaces with linear maps and the opposite
category $\mathpzc{Vect}^{\operatorname{op}}$.

Recall that given a real vector space $V$, its dual is defined as the set of all linear maps from $V$ to the model space $\R$,
\begin{equation}\label{standard_dual}V^\ast:=\Hom_{lin}(V,\R)\,.
\end{equation}
The resulting set $V^\ast$ posses a natural structure of a vector space {induced by the linear structure on $\R$}.
Let now $\psi:V\ra W$ be a linear map. Its dual
$$\psi^\ast:W^\ast\lra V^\ast \ , $$
defined by the formula $\psi^\ast(h):=h\circ\psi$, where $h:W\ra \R$ is an element of $W^\ast$, is again a liner map. In this way one constructs a functor
$$\xymatrix{
\mathpzc{Vect}\ar[r]^{\!\!\!\ast} &\mathpzc{Vect}^{\operatorname{op}}\ ,
}$$
which is, in fact, an equivalence of categories of finite-dimensional vector spaces. This follows from the well-known fact that the space $(V^\ast)^\ast$ is canonically isomorphic to $V$ (every linear map from $V^\ast$ to $\R$ is an evaluation).

\subsection{{Duality for vector bundles}}
The {above construction} can be straightforwardly extended to the \emph{category} of vector bundles and vector bundle morphisms $\VB$. The fiber-wise application of the functor $\ast$ produces a natural equivalence of categories
$$\xymatrix{
\VB \ar@<1ex>[rr]^{\ast}&&\ar@<1ex>[ll]^{\star} \VB^\ast \ .
}$$
{The inverse functor $\star$ is also obtained by the fiber-wise application of the vector space duality. Note that functor $\ast$ is not valued in the category of vector bundles $\VB$, but rather in $\VB^\ast$, which symbol} denotes the category of vector bundles with {morphisms} in the sense of Zakrzewski \cite{Zakrz}, known in the literature also as \emph{vector bundle morphisms of the second kind} \cite{GG}, \emph{star bundle morphisms} \cite{KMS_nat_op_diff_geom_1993}, or \emph{comorphisms} \cite{HM}.  More precisely:
\begin{definition}\label{ZM} A \emph{Zakrzewski morphism} between two vector bundles $E\ra M$ and $F\ra N$ is a relation $f:E\relto F$ in the total spaces which projects to a smooth map $f_0:N\ra M$ on the bases and with the property that, for each $x\in N$, the restriction of $f$ to the fiber $E_{f_0(x)}$  is a linear map $f|_{E_{f_0(x)}}:E_{f_0(x)}\to F_x$.
\begin{equation}
\label{eqn:Zakrzewski_morphism}
\xymatrix{
E \ar[d]\ar@{-|>}[rr]^f && F\ar[d]\\
M&& N\ar[ll]_{f_0} \ .
}\end{equation}
Equivalently, the dual map $f^\ast:F^\ast\ra E^\ast$ is a true vector bundle morphism over $f_0$
\begin{equation}
\label{eqn:ZM_1}
\xymatrix{
E^\ast \ar[d] && F^\ast\ar[d]\ar[ll]_{f^\ast}\\
M&& N\ar[ll]_{f_0} \ .
}\end{equation}
\end{definition}

\subsection{Vector bundles as homogeneity structures}
It is an old and easy observation, known as \emph{Euler's homogeneous function theorem}, that any differentiable function $f$ on $\R^n$ is 1-homogeneous if and only if it is linear. This was the starting point in \cite{GR09} to study vector bundles as manifolds $E$ equipped with an action $h:E\times\R\to E$ of the monoid $\GG=(\R,\cdot)$ of multiplicative reals. A manifold equipped with such an action was called a \emph{homogeneity structure} in \cite{GR12}. In this language, a vector bundle is just a homogeneity structure satisfying certain regularity assumption. Note that if $E$ is a vector bundle, then $h_t=h(\cdot,t)$ are represented by \emph{homotheties} in the vector bundle: $h_t(v)=t\cdot v$. Morphisms between vector bundles are then characterized simply as smooth maps intertwining homotheties, and vector subbundles as submanifolds which are invariant with respect to homotheties \cite{GR09}. {Thus}, instead of \eqref{standard_dual}, we can write
\begin{equation}\label{h-dual}
V^\ast:=\Hom_{\GG}(V,\R)\,,
\end{equation}
where $\Hom_{\GG}$ are smooth maps intertwining (only) the multiplications by reals, i.e. the actions of the monoid $\GG$. The advantage of \eqref{h-dual} over \eqref{standard_dual} is that the former one can be naturally generalized to `higher' homogeneity structures, which are not linear. In this paper we shall study these kind of generalizations of the duality functor.

\subsection{Graded bundles}

Vector bundles can be generalized by passing to general homogeneity structures. Consider now a smooth action $h:\R\times F\to F$ of the monoid $\GG=(\R,\cdot)$ on a manifold $F$.
The set $F$ equipped with such a homogeneity structure will be called a \textit{graded bundle}.
If $h_0(F)=0^F$ for some element $0^F\in F$, then we will speak about a \emph{graded space}.
The reason for using the name ``bundle'' is that $F$ is canonically a locally trivial fibration. It is locally isomorphic with the structure described in the following example (see \cite{GR12}).

\begin{example}\label{e1}
Consider a manifold $M$ and $\dd=(d_1,\dots,d_k)$, with positive integers $d_i$. The trivial fibration $\zt:M\times\R^\dd\to M$, where
$\R^\dd=\R^{d_1}\ti\cdots\times\R^{d_k}$, is canonically a graded bundle with the homogeneity structure $h^\dd$ given by $h_t(x,y)=(x,h_t^\dd(y))$, where
\begin{equation}\label{hsm}
h_t^\dd(y_1,\dots,y_k)=(t\cdot y_1,\dots, t^k\cdot y_k)\,,\quad y_i\in\R^{d_i}\,.
\end{equation}
\end{example}

\begin{theorem}[Grabowski-Rotkiewicz \cite{GR12}]\label{theorem:1}
Any graded bundle $(F,h)$ is a locally trivial fibration $\zt:F\to M$ with a typical fiber $\R^\dd$, for some $\dd=(d_1,\dots,d_k)$, and the homogeneity structure locally equivalent to the one in Example \ref{e1}. In particular, any graded space is diffeomorphically equivalent with $(\R^\dd,h^\dd)$ for some $\dd$.
\end{theorem}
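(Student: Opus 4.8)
The plan is to reconstruct the standard model of Example \ref{e1} out of the homogeneous functions of $(F,h)$. First I would examine the map $h_0 = h(0,\cdot)$. The monoid law $h_t\circ h_s = h_{ts}$ gives $h_0\circ h_0 = h_0$, so $h_0$ is a smooth idempotent; its image $M := h_0(F)$ is therefore a closed submanifold and $\zt := h_0 : F \to M$ is a smooth retraction. Moreover every $p \in M$ is a fixed point of the whole action, since $p = h_0(q)$ forces $h_t(p) = h_{t\cdot 0}(q) = h_0(q) = p$. Thus $M$ is the candidate base, and the restriction of $h$ to each fibre $\zt^{-1}(p)$ is an action fixing the point $p$.

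Next I would introduce the homogeneous pieces of smooth functions. For $g\in \Cf(F)$ the function $(t,x)\mapsto g(h_t(x))$ is smooth, so its Taylor coefficients $g_i(x) := \tfrac1{i!}\pa_t^i|_{t=0}\,g(h_t(x))$ are smooth; comparing the Taylor expansions of $g(h_{ts}(x))$ and $g(h_t(h_s(x)))$ shows that each $g_i$ is homogeneous of degree $i$, i.e. $g_i\circ h_s = s^i\,g_i$, with $g_0 = g\circ h_0 = \zt^\ast(g|_M)$. The same mechanism, applied to the linear isotropy representation $t\mapsto (h_t)_\ast$ on $T_pF$ at a fixed point $p\in M$, is the crucial point: smoothness of this representation at $t=0$ forces it to diagonalise with non-negative integer weights, $T_pF = \bigoplus_{i\ge0} V_i^{(p)}$ with $(h_t)_\ast|_{V_i^{(p)}} = t^i\,\id$, where $V_0^{(p)} = T_pM$. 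The integrality of the weights and the absence of Jordan blocks or logarithmic terms is exactly where the hypothesis that $h$ extends smoothly across $t=0$ enters, as opposed to a mere $\R_{>0}$-flow. Setting $d_i := \dim V_i^{(p)}$ and $k := \max\{i : V_i^{(p)}\neq 0\}$, continuity of this decomposition makes $\dd = (d_1,\dots,d_k)$ locally constant along $M$.

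I would then manufacture an adapted chart around a chosen $p\in M$. Take coordinates $x^a$ on $M$ near $\zt(p)$ and pull them back by $\zt$; these have weight $0$. For each $i = 1,\dots,k$ pick smooth functions whose weight-$i$ homogeneous components $y^I_i$, extracted by the Taylor procedure above, have differentials at $p$ spanning the weight-$i$ part of $T_p^\ast F$. Because the weight components of $T_p^\ast F$ are spanned by differentials of homogeneous functions of the corresponding degree, the differentials of $(x^a, y^I_i)$ lie in distinct weight subspaces and are independent at $p$, so $(x^a, y^I_i)$ is a coordinate system on a neighbourhood $U$ of $p$ identifying $\zt|_U$ with the projection $\R^\dd\times\R^{\dim M}\to \R^{\dim M}$. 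In such a chart every coordinate is homogeneous, whence $x^a\circ h_t = x^a$ and $y^I_i\circ h_t = t^i\,y^I_i$; this is precisely the model $h^\dd$ of Example \ref{e1}. Hence $\zt : F\to M$ is locally trivial with typical fibre $\R^\dd$ and $h$ is locally equivalent to the standard homogeneity structure.

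For a graded space $M$ is a single point, the chart is centred at the unique fixed point $0^F = h_0(F)$, and since $h_t\to h_0$ collapses $F$ onto $0^F$ as $t\to0$ the adapted homogeneous coordinates extend to a global diffeomorphism $F\cong(\R^\dd, h^\dd)$. I expect the main obstacle to be the third step: producing genuinely homogeneous functions that assemble into a chart. One must verify that the weight-$i$ components $y^I_i$ can be chosen with independent differentials realising all of the weight-$i$ part of $T_p^\ast F$, and that the resulting functions are defined on a full neighbourhood compatible with the fibration $\zt$, rather than merely as germs. A secondary but essential technical point is the rigorous justification, in the second step, that smoothness of $t\mapsto(h_t)_\ast$ at $t=0$ forces non-negative integer weights together with full diagonalisability; this is precisely what distinguishes a genuine graded bundle from a manifold carrying only a local $\R_{>0}$-flow.
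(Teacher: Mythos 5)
The paper itself gives no proof of this theorem — it is quoted from \cite{GR12} — but your argument reconstructs essentially the proof given there: $h_0$ as a smooth idempotent retracting $F$ onto the submanifold $M$ of fixed points, homogeneous components of functions via Taylor expansion of $g\circ h_t$ at $t=0$, diagonalisation of the isotropy representation $t\mapsto (h_t)_*$ on $T_pF$ with non-negative integer weights forced by smoothness across $t=0$, and adapted homogeneous charts. The two points you flag at the end close by routine arguments already implicit in your setup: the identity $d(g_i)|_p=P_i^*(dg|_p)$ (where $P_i$ is the weight-$i$ projection of $(h_t)_*$) shows that differentials of homogeneous components exhaust each weight subspace of $T_p^*F$, and each $g_i$ is automatically defined on the saturated open set $h_0^{-1}(W\cap M)$, so equivariance under the diffeomorphisms $h_t$, $t\neq 0$, propagates the local diffeomorphism at $p$ to a fibred, $\GG$-equivariant chart over a whole neighbourhood in $M$.
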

We say that $y_i^a$ are coordinates of degree $i$ (and coordinates in $M$ are of degree 0).
More generally, we call a function $f$ on a graded bundle \emph{homogeneous of degree $i$} if $f\circ{h_t}=t^i\cdot f$ for all $t\geq 0$ (and thus also $f\circ{h_t}=t^i\cdot f$ for all $t\in\R$). It is an important observation \cite{GR12} that only {non-negative integer degrees} are allowed, so the algebra $\mathcal{A}(F)\subset C^\infty(F)$, spanned by homogeneous functions, is {naturally $\N$-graded},
$$\mathcal{A}(F) = \bigoplus_{i \in \mathbb{N}}\mathcal{A}^{i}(F)\,.$$ This algebra  is referred to as the \emph{algebra of polynomial functions} on $F$.
For instance, $\cA(\R^\dd)=\R[y^a_i]$
is the true polynomial algebra with the gradation induced by weights of homogeneous coordinates $y^a_i$.

We have an obvious identification $\cA^0(F)=C^\infty(M)$ and the above gradation serves also as a canonical gradation of $\cA$ viewed as a $C^\infty(M)$-module.
Polynomial functions of degree at most $k$ shall be denoted by
$$\mathcal{A}^{\leq k}(F):= \bigoplus_{i=0}^k\mathcal{A}^{i}(F)\,.
$$

We obtain the category $\GB$ of graded bundles, defining a \emph{graded bundle morphism} between two graded bundles $(F^1,h^1)$ and  $(F^2,h^2)$ as a smooth maps $\phi:F^1\to F^2$ intertwining the corresponding $\GG$-actions:
\begin{equation}\label{intt}h^2_t\circ\phi=\phi\circ h^1_t\,.
\end{equation}
One can prove that such $\phi$ is a bundle map covering a certain {smooth} map $\ul{\phi}:M_1\to M_2$ (c.f. \cite{GR09,GR12}). The full \emph{subcategory} consisting of \emph{graded bundles of degree $r$} and their morphisms will be denoted by $\GB_r$.

Canonical examples of graded bundles are, for instance, vector bundles, $n$-tuple (in particular, double) vector bundles, higher tangent bundles $\sT^kM$, and multivector bundles $\we^n\sT E$ of vector bundles $\tau:E\to M$ with respect to the projection $\we^n\sT\tau:\we^n\sT E\to \we^n\sT M$ (see \cite{GGU14}). In \cite{MJ_MR_higher_alg_2015} two of us introduced a concept of a higher algebroid which also carries a structure of graded bundle. Note also that graded structures on a supermanifold $F$ can be naturally
lifted to $\sT F$ and $\sT^*F$ (see \cite{Gr13, GR12}).

\begin{remark}
Note that preserving the graded structure in $\R^\dd$ means intertwining the action \eqref{hsm} and does not mean that the decomposition  $\R^\dd=\R^{d_1}\ti\cdots\times\R^{d_r}$ is preserved, nor that the map is linear. Indeed, consider $\R^{(1,1)}$, i.e. $\R^2=\R\times\R$ with coordinates $y,z$ of weights 1 and 2, respectively. The map $\phi(y,z)=(y,z+y^2)$ is a graded morphism which in non-linear and does not preserve the splitting. This is why the theory of graded bundles is not just the theory of graded vector bundles.
However, as every homogeneous function on $\R^\dd$ is polynomial \cite{GR12}, the graded bundle are particular kinds of \emph{polynomial bundles}, i.e. fibrations which locally look like $U\times\R^N$ and the change of coordinates (for a certain choice of an atlas) are polynomial in $\R^N$.
\end{remark}
Graded bundles are examples of \emph{graded manifolds}, as one can always pick an atlas of $F$ with local coordinates $(x^{A}, y_{w}^{a})$, for which we can associate with \emph{weights} (or \emph{degrees}) $\w(x^{A}) =0$ and  $\w(y_{w}^{a}) = w$, {where} $1\leq w \leq r$, for some $r \in \mathbb{N}$ known as the \emph{degree} of the graded manifold. The index  $a$ should be considered as a `generalized index' running over all the possible weights. The label $w$ in this respect largely redundant, but it will come in very useful when checking the validity of various expressions.
The graded structure is conveniently encoded in a \emph{weight vector field}
$$\nabla_F=\sum_{w,a}w\cdot y_w^a\pa_{y_w^a}\,,
$$
whose action, \emph{via} the Lie derivative, counts the degree of homogeneous functions and tensors.
\begin{remark}
Note that vector bundles can be understand as graded bundles of degree 1.
This description has many advantages over the standard one. For instance, it gives immediately the concept of compatibility of vector (or, more generally, graded)  bundle structures corresponding to $h^i$, $i=1\dots, n$: the corresponding `homotheties' $h^i_t=(\cdot,t)$ should commute, $h^i_t\circ h^j_s=h^j_s\circ h^i_t$ for every $i,j=1,\hdots,n$. In particular, double vector bundles, defined by Pradines \cite{Pradines}, can be described simply as manifolds equipped with two compatible vector bundle structures in the above sense \cite{GR09}.
\end{remark}
\begin{remark}\label{split}
The graded bundles should not be erroneously identified with \emph{graded vector bundles} which are just direct sums of vector bundles over the same base manifold $M$, i.e. $E= E^1\oplus_M\cdots\oplus_ME^r$, where elements of $E^i$ have degree $i$. Of course, graded vector bundles
are particular graded bundles.

On the other hand, we have a Gaw\k{e}dzki--Batchelor-like theorem here, stating that every graded bundle is \emph{non-canonically isomorphic} to a graded vector bundle, i.e. a split graded bundle (cf. \cite{Bonavolonta:2013,Bruce:2014}), $F^{k} \simeq \bar{F}_{1}\oplus_{M}\bar{F}_{2}\oplus_{M} \cdots \oplus_{M} \bar{F}_{k}$, for some vector bundles $\bar{F}_i$, $i=1, \ldots, k$, canonically associated with $F^k$.
Note that \emph{the split form} of a graded bundle \emph{is} canonical and only the isomorphism itself  is non-canonical.  In particular the category of graded bundles is \emph{not} equivalent to that of graded vector bundles, as in general we allow  morphisms that are not strictly  linear as long as they preserve the weight,  we know that they are generally polynomial.
\end{remark}

\subsection{The ideas behind duality for graded bundles}\label{ssec:ideas}

{Our goal in this paper is to define objects in a natural sense ``dual'' to graded bundles. A crucial step is to establish a satisfactory notion of duality for graded spaces. Our motivation comes from formula \eqref{h-dual}, i.e. given a graded space $V$ of degree $k$ we would like to define the \emph{dual of $V$} as the space of maps intertwining the actions of the monoid $\GG$ between $V$ and some model space carrying a homogeneity structure. A natural question about the choice of such a model space arises.

Obviously, $\R$ with its natural homogeneity structure of degree 1 is not a good choice, as the resulting space will not recognize the elements of $V$ of degree 2 and higher. Two natural choices that will guarantee that  the dual object of $V$ will contain information about the elements of arbitrary degree in $V$ are: $\mbA:=\cA(\R)$, the space of standard  polynomial functions on $\R$, and $\kmbA{k}:=\cA^{\leq k}(\R)$, the space of  polynomial functions on $\R$ of degree not greater than $k$. The first choice is universal for every degree $k$, yet the resulting dual space will be infinite-dimensional. On the other hand, by choosing $\kmbA{k}$ we stay within the finite-dimensional realm, yet for the price of changing the model object for every possible degree $k$.

The space $\mbA$ can be represented by the ring $\R[\eps]$ of all polynomials of one variable, while $\kmbA{k}$ as an algebra can be represented by the quotient $\R[\eps]/\ideal{k}$. The natural homogeneity structure in both cases reads as $h_t(\eps^i)=t^i\eps^i$. Note that both $\mbA$ and $\kmbA{k}$ are naturally commutative graded algebras. Consequently, the resulting dual spaces consist of algebra-valued maps respecting the grading (i.e., the homogeneity structure) and as such they will carry natural commutative graded algebra structures, with the multiplication at  the level of values. Thus the concept of duality sketched above can be naturally understood as a functor from the category of graded spaces to the category opposite to the category of graded polynomial algebras (cf. Definition \ref{def:gr_poly_alg}) or free graded Weil algebras (see Definition \ref{def:free_weil_alg}), respectively, depending of the choice on our model space $\mbA$ or $\kmbA{k}$.

Note that this approach is close to the idea present in Gel'fand-Naimark Theorem, where the objects dual to compact topological spaces are $C^*$-algebras of continuous functions on them. In particular, the dual of a vector space $V$ is in our picture understood as the polynomial algebra $\mathcal{A}(V)$ rather than the standard dual space $V^*$ consisting of homogeneous elements of degree 1 (see Remark \ref{rem:Gelfand_Naimark}).

It is now natural to ask weather the initial graded space can be reconstructed from its dual, as it is in the case of vector spaces. The answer is affirmative: the space of graded algebra homomorphisms from a given polynomial algebra to the model space $\mbA$ posses a natural structure of a graded space with the homogeneity structure induced by the natural homogeneity structure in $\mbA$. This construction defines a functor ${\Homgr}(\cdot, \mbA)$ from the category opposite to the category of graded polynomial algebras to the category of graded spaces which, together with the initial duality functor $\Hom_{\GG}(\cdot,\mbA)$, establishes an equivalence between these two categories. Interestingly, both functors are defined by the same model object $\mbA$, on one occasion regarded as a homogeneity structure on the other as a graded algebra. An analogous construction, after substituting graded polynomial algebras with free graded Weil algebras and after restricting our attention to graded bundles of degree at most $k$, works for the notion of duality related with the model space $\kmbA{k}$.

The extension of the above concepts of duality to the category of graded bundles is now straightforward. Per analogy to the vector bundle case, we define the object dual to a given graded bundle simply by applying the duality functor for graded spaces to each fiber separately. As a result we obtain, depending on our choice of the model space $\mbA$ or $\kmbA{k}$, a bundle of graded polynomial algebras or graded free Weil algebras, respectively (in particular they are vector bundles).  Working with such objects requires some attention. First of all, the fibers of graded polynomial algebra bundles are infinite-dimensional. Thus some attention is needed when discussing the differentiability features. The answer to these issues can be given within the theory of locally finite-rank graded vector bundles (see Section \ref{sec:fin_gr_bund}). Secondly, from the categorical perspective, the proper concept of morphisms for the bundles in question is not the standard one, but the \emph{Zakrzewski morphism} (cf. Remark \ref{rem:ZM}). The reason for this is precisely the same as in the case of vector bundles: the duality functor for graded spaces reverses the the direction of arrows. The conceptual difficulties related with a Zakrzewski morphism can be omitted by passing to the dual morphism (in the standard, vector bundle sense -- see diagram \eqref{eqn:ZM_1}) for the price of substituting the (graded polynomial or free graded Weil) algebra bundles with the proper co-algebra bundles. This describes yet another approach to the concept of duality for graded bundles.Let us  remark that another approach to the duality for graded bundles, the so-called linear duality, together with applications to higher-order Lagrangian systems, has been developed in \cite{Bruce:2014,Bruce:2014b}.

\subsection{Organization of the paper}
In this work we basically realize the program sketched in the preceding subsection. We begin by defining the duality between graded spaces and graded polynomial algebras (free graded Weil algebras) using the model object $\mbA$ ($\kmbA{k}$, respectively). Later we extend these results to the category $\GB$ of graded bundles, at the end of the day obtaining Theorems \ref{thm:duality} and \ref{thm:duality1} establishing the equivalence between the category $\GB$ and the categories $\GPB$ of graded polynomial algebra bundles and the category $\GPCB$ of graded polynomial coalgebra bundles. Using the duality related with the model object $\kmbA{k}$ results in Theorems \ref{thm:k_duality} and \ref{thm:k_duality1} describing the equivalence between the category $\GB_k$ of graded bundles of degree at most $k$ with the categories $\FGWAB_k$ of free graded Weil algebra bundles of degree at most $k$ and the category $\FGWCB_k$ of free graded Weil coalgebra bundles of degree at most $k$.

The latter results are then applied to obtain elegant characterization of graded bundles in Theorem \ref{cn-characterization} and double graded bundles in Theorem \ref{cn-characterization1}. As particular cases we
were able to recover (in an elementary way) the characterization of double vector bundles due to Chen, Liu \&  Sheng \cite{Chen:2014}.  Finally, we find the supergeometric counterpart  of our results. In particular,   we get easily the characterization of $N$-manifolds of degree 2 by Bursztyn, Cattaneo, Mehta \& Zambon as announced in \cite{Carpio-Marek:2015}, the result parallel to the characterisation of graded bundles of degree 2 given in Theorem~\ref{thm:deg_2_gr_bndls}.

\section{Duality for graded spaces}\label{ssec:duality_gr_spaces}

Now we will formalize the program described above.
All graded vector spaces and graded bundles in this paper will be $\N$-graded and real. Such a grading we call \emph{connected} if the part of order $0$ is trivial and $1$-dimensional, so $\R$ for graded vector spaces and $M\times\R$ for graded vector bundles.

\begin{proposition}\label{prop:graded_generators} Any connected $\N$-grading in a polynomial algebra $\R[z^1,\dots,z^N]$ gives a graded algebra isomorphic to $\cA(\R^\dd)$ for some $\dd=(d_1,d_2,\hdots,d_r)$.
\end{proposition}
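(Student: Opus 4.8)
The plan is to manufacture, out of the abstract connected grading, a system of algebraically independent homogeneous generators, grouped by degree. Write $A = \R[z^1,\dots,z^N] = \bigoplus_{i\ge 0} B^i$ for the given connected $\N$-grading, so that $B^0 = \R\cdot 1$ and $B^i\cdot B^j\subseteq B^{i+j}$. The target is an isomorphism of graded algebras $A \cong \cA(\R^\dd)$, where $d_i$ will be the number of generators needed in degree $i$. Since $\cA(\R^\dd) = \R[y^a_i]$ is precisely the free polynomial algebra on $d_i$ generators of weight $i$, it suffices to exhibit homogeneous elements $y^a_i \in B^i$ that are algebraically independent and generate $A$; sending $y^a_i$ to the corresponding coordinate of degree $i$ then yields the desired graded isomorphism.

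First I would pass to the graded augmentation ideal $\mathfrak{m} := \bigoplus_{i\ge 1}B^i$. Connectedness gives $A/\mathfrak{m} = B^0 = \R$, so $\mathfrak m$ is the kernel of an $\R$-algebra homomorphism $A \to \R$, i.e.\ evaluation at some point $p\in\R^N$; thus $\mathfrak m$ is the maximal ideal $(z^1 - p_1,\dots,z^N - p_N)$ of functions vanishing at $p$. As $\R^N$ is smooth of dimension $N$, the Zariski cotangent space $\mathfrak m/\mathfrak m^2$ has dimension exactly $N$. Since $\mathfrak m$ and $\mathfrak m^2$ are graded ideals, $\mathfrak m/\mathfrak m^2$ inherits the grading, and I set $d_i := \dim_\R (\mathfrak m/\mathfrak m^2)^i$, so that $\sum_i d_i = N$ and only finitely many $d_i$ are nonzero. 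Letting $r$ be the top degree with $d_r\ne 0$, put $\dd = (d_1,\dots,d_r)$.

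Next, for each $i$ I would choose homogeneous $y^a_i\in B^i$, $a = 1,\dots,d_i$, whose classes form a basis of $(\mathfrak m/\mathfrak m^2)^i$. The graded Nakayama lemma for the connected $\N$-graded algebra $A$ guarantees that these $N$ elements generate $A$ as an $\R$-algebra: arguing by induction on degree, any element of $B^i$ is, modulo $\mathfrak m^2$, a linear combination of the $y^a_i$, while the degree-$i$ part of $\mathfrak m^2$ consists of products of strictly lower degree, which are handled inductively. Finally, because $A$ is a polynomial ring, hence an integral domain of transcendence degree $N$ over $\R$, any generating set of cardinality $N$ must be algebraically independent: the induced surjection $\R[Y^a_i]\twoheadrightarrow A$ from the free graded polynomial algebra has prime kernel of height $0$, hence zero, so it is an isomorphism. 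Being degree-preserving by construction, it is an isomorphism of graded algebras $\cA(\R^\dd)\xrightarrow{\sim} A$.

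The main obstacle is the single conceptual input that connectedness forces $\mathfrak m$ to be the maximal ideal at an honest smooth point, and hence $\dim \mathfrak m/\mathfrak m^2 = N$; this is exactly what pins down the total number of generators and rules out pathological gradings, after which everything reduces to bookkeeping (graded Nakayama together with the transcendence-degree count). Alternatively, one could argue geometrically: the grading defines a homogeneity structure $h$ on $\R^N$ with $h_0$ collapsing $\R^N$ to the point $p$ by connectedness, so $(\R^N,h)$ is a graded space and Theorem \ref{theorem:1} identifies it with $(\R^\dd,h^\dd)$, whence $A = \cA(\R^N)\cong\cA(\R^\dd)$. That route, however, requires first checking that the $h_t$ assemble into a smooth monoid action, which again rests on the finite-dimensionality of the graded pieces established above, so I would favour the purely algebraic argument.
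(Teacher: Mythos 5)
Your proof is correct and follows essentially the same route as the paper's: identify the augmentation ideal $\mathfrak{m}=\bigoplus_{i\ge 1}B^i$ with the maximal ideal of a point of $\R^N$, read the generator counts $d_i$ off the graded pieces of $\mathfrak{m}/\mathfrak{m}^2$, lift to homogeneous algebra generators by a graded Nakayama induction, and deduce algebraic independence from the transcendence-degree count. The only cosmetic difference is that the paper builds the homogeneous lifts explicitly as a linear change of the $z^j$ corrected by elements of $\mathfrak{m}^2$, whereas you pick homogeneous representatives directly, which is legitimate since $\mathfrak{m}^2$ is a graded ideal.
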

\begin{proof}
Let $\cA = \oplus_{i=0}^\infty\cA^i$ be a connected grading in $\cA=\R[z^1,\dots,z^N]$, $\cA^0=\R$. Thus $\cI :=  \oplus_{i=1}^\infty\cA^i$ is a maximal ideal in $\cA$. Since any codimension-one ideal in $\cA$ is of the form $\langle z^1-p^1, \ldots, z^N- p^N\rangle$ for a point $p = (p^1, \ldots, p^N)\in \R^N$, without any loss of generality we may assume that $\cI$ is the ideal generated by $z^1, \ldots, z^N$.

We conclude that $V:=\cI/\cI^2$, where $\cI^2 = \cI\cdot \cI$, is a vector space with basis $\{z^1+\cI^2, \ldots, z^N + \cI^2\}$. On the other hand, $V$ is a graded vector space, $V=\oplus_{k=1}^rV^k$, with $V^1 = \cA^1$, $V^2=\cA^2/\cA^1\cdot \cA^1$, $V^3 = \cA^3/\cA^1\cdot \cA^2$, and so on. We can find a linear transformation $\ul{z}^{i} = a^{i}_j z^j$, $a^i_j\in \R$, such that $\{\ul{z}^{i_l+1} + \cI^2, \ul{z}^{i_l+2} + \cI^2, \ldots, \ul{z}^{i_{l+1}} + \cI^2\}$ form a basis of $V^l$, $l=1, \ldots, r$ (here, $i_1=0$, $i_{l+1}-i_l=\dim V^l$). {Note that $\ul{z}^i$ need not to be homogenous although its class in modulo $\cI^2$ is so.}

Then we can find $f^i\in \cI\cdot \cI$, $1\leq i\leq N$ such that for $y^i = \ul{z}^i + f^i$, the polynomials $y^1, \ldots, y^{i_2}$ span $\cA^1$, the polynomials $y^{i_2+1}, \ldots, y^{i_3}$ together with $\cA^1\cdot \cA^1$ span $\cA^2$, etc., and, moreover, $(y^i)_{1\leq i\leq n}$ are homogenous. It follows that $(y^i)_{1\leq i\leq n}$ are homogenous and generate all the algebra $\cA$. Since there are $n$ of them, which is equal to the transcendental dimension of the field of fractions $(\cA)$ over $\R$, they are algebraically independent. Thus we can treat $\{y^i\}$ as the set of graded variables of the polynomial algebra $\cA$, so $\cA$ is graded isomorphic to $\cA(\R^\dd)$ for $\dd=(\dim V^1,\dim V^2,\hdots)$. \end{proof}

\begin{definition}
\label{def:gr_poly_alg}
A graded algebra isomorphic with $\cA(\R^\dd)$ for some $\dd=(d_1,d_2,\hdots,d_r)$ will be called a \emph{graded polynomial algebra}.
\end{definition}

\subsection{Duality related with the model space $\mbA$}

Let us discuss the concept of duality related with the choice of the model space $\mbA=\R[\eps]$.

\begin{definition}\label{def:duals_to_graded_spaces} Let $V$ be a graded space and let $A=\oplus_{i=0}^\infty A^i$ be a graded algebra.
Define the \emph{objects dual to $V$ and $A$}, respectively, as
$$
\Gdual{V} := \Hom_{\GG}(V, \mbA), \quad \algdual{A}:={\Homgr}(A,\mbA),
$$
i.e. $\Gdual{V}$ consists of smooth functions $V\ra {\mbA}$ intertwining the homogeneity structures on $V$ and ${\mbA}$, while  $\algdual{A}$ consists of graded algebra homomorphisms $A\ra {\mbA}$. A function with values in ${\mbA}$ is considered  continuous (resp. smooth) if for every $k\geq 0$ its composition with the natural projection ${\mbA}\ra {\mbA}/\ideal{k}\simeq \R^{k+1}$ is so.

Let now $f:V\ra W$ be a morphism of graded spaces and let $\phi:A\ra B$ be a graded algebra morphism.
We define the \emph{dual morphisms} $\Gdual{f}:\Gdual{W}\ra\Gdual{V}$ of $f$ and $\Gdual{\phi}:\Gdual{B}\ra\Gdual{A}$ of $\phi$ by
\begin{align*}
\Gdual{f}(\alpha)&:= \alpha\circ f:V\ra {\mbA}\quad \text{and} \\
\algdual{\phi}(\zeta)&:= \zeta\circ \phi:A\ra {\mbA}\ ,
\end{align*}
for every homogeneous map $\alpha:W\ra{\mbA}$ and for every graded algebra morphisms $\zeta:B\ra{\mbA}$, respectively.
\end{definition}

\begin{remark}\label{rem:Gelfand_Naimark}
Note that for a vector space $V$ (a graded space of degree 1), Definition \ref{def:duals_to_graded_spaces} does not give us the standard notion of duality, but the commutative algebra of all polynomial functions on $V$. This is rather in the spirit of Gel'fand-Naimark theorem for commutative $C^*$-algebras or the concepts of noncommutative differential geometry, where we regard the dual of a topological or geometrical space to be an appropriate algebra of functions on it. On the other hand, the vector space dual $V^*$ can be described  in our language as  the $1^{\mathrm{st}}$-dual of $V$ -- {see Definition \ref{def:duals_to_graded_spaces_k}.}
\end{remark}

\begin{lemma}\label{lem:duals_to_graded_space}
 Let $V$ be a graded space and $A=\oplus_{i=0}^\infty A^i$ be a graded polynomial algebra. Then $\Gdual{V}$ is a  graded polynomial algebra, while $\algdual{A}$ is a graded space, in a natural way. Moreover, the evaluation maps give rise to  natural isomorphisms
\begin{equation}\label{eqn:V_star_star}
\ev_V: V \ra \algdual{(\Gdual{V})}
\end{equation}
of graded spaces, and
\begin{equation}\label{eqn:A_star_star}
\ev_A: A\ra \Gdual{(\algdual{A})}
\end{equation}
of  graded algebras.
\end{lemma}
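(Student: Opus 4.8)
The plan is to make the abstract duality completely explicit by computing $\Gdual{V}$ and $\algdual{A}$ in coordinates, and then to read off the evaluation maps directly. Fix an identification $V\simeq\R^\dd$ with homogeneous coordinates $y_w^a$ of degree $w$ (Theorem \ref{theorem:1}), and write the homogeneity structure on $\mbA=\R[\eps]$ as $h_t^{\mbA}(\eps^i)=t^i\eps^i$. Any $\alpha\in\Gdual{V}=\Hom_{\GG}(V,\mbA)$ decomposes as $\alpha(v)=\sum_i\alpha_i(v)\eps^i$, and the smoothness convention of Definition \ref{def:duals_to_graded_spaces} forces each component $\alpha_i\colon V\to\R$ to be smooth. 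The intertwining condition $\alpha\circ h_t=h_t^{\mbA}\circ\alpha$ reads $\alpha_i(h_t v)=t^i\alpha_i(v)$, so $\alpha_i$ is homogeneous of degree $i$ and hence, being smooth, a polynomial, i.e. $\alpha_i\in\cA^i(V)$. First I would therefore exhibit the map $\cA(V)\to\Gdual{V}$ sending a homogeneous $f\in\cA^i(V)$ to $\widehat f$, where $\widehat f(v):=f(v)\eps^i$; a direct check gives $\widehat{fg}=\widehat f\,\widehat g$ for the pointwise product on $\Gdual{V}$, so this is an injective algebra homomorphism.

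The main obstacle is surjectivity, equivalently the claim that for $\alpha\in\Gdual{V}$ only finitely many components $\alpha_i$ are nonzero -- otherwise $\alpha$ would be valued in formal power series rather than in $\mbA=\R[\eps]$. This is the one genuinely non-formal point. Here I would argue as follows: if infinitely many $\alpha_i\in\cA^i(V)$ were nonzero, then the non-vanishing locus $\{\alpha_i\neq 0\}$ of each such component would be open and dense in $V\simeq\R^n$ (its complement being the zero set of a nonzero polynomial), so by the Baire category theorem the intersection of this countable family would be nonempty; at any point $v$ of the intersection the element $\alpha(v)=\sum_i\alpha_i(v)\eps^i$ would have infinitely many nonzero coefficients, contradicting $\alpha(v)\in\R[\eps]$. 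Hence $f\mapsto\widehat f$ is an isomorphism $\cA(V)\xrightarrow{\sim}\Gdual{V}$. The grading is supplied intrinsically by the homogeneity structure $H_t\alpha:=h_t^{\mbA}\circ\alpha$ on $\Gdual{V}$, which is an action by algebra automorphisms satisfying $H_t\widehat f=t^i\widehat f$ for $f\in\cA^i(V)$; thus $\Gdual{V}\simeq\cA(V)\simeq\cA(\R^\dd)$ is a graded polynomial algebra, proving the first assertion.

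For the second assertion, write the graded polynomial algebra as $A=\R[u_w^b]$ with generators $u_w^b$ of degree $w$. A graded homomorphism $\zeta\in\algdual{A}=\Homgr(A,\mbA)$ must send each generator into $\mbA^w=\R\eps^w$, so $\zeta(u_w^b)=c_w^b\,\eps^w$ for uniquely determined reals $c_w^b$, and conversely any such assignment extends by freeness of the polynomial algebra. This identifies $\algdual{A}$ with the finite-dimensional space $\R^\dd$ coordinatised by the $c_w^b$, and the induced action $H_t\zeta:=h_t^{\mbA}\circ\zeta$ acts by $c_w^b\mapsto t^w c_w^b$, which is precisely the homogeneity structure $h^\dd$ of Example \ref{e1}; hence $\algdual{A}$ is a graded space.

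Finally I would verify the evaluation maps in these coordinates. For $\ev_V(v)(\alpha):=\alpha(v)$ one checks it is an algebra homomorphism $\Gdual{V}\to\mbA$ (pointwise product) sending degree-$i$ elements into $\mbA^i$, so $\ev_V(v)\in\algdual{(\Gdual{V})}$; it intertwines homogeneity since $\ev_V(h_t v)(\alpha)=\alpha(h_t v)=h_t^{\mbA}(\alpha(v))=\bigl(H_t\,\ev_V(v)\bigr)(\alpha)$. Evaluating against the generators $\widehat{y_w^a}$ of $\Gdual{V}$, the coordinate of $\ev_V(v)$ equals $\widehat{y_w^a}(v)=y_w^a(v)\eps^w$, so $\ev_V$ is the identity in the natural coordinates and hence a graded-space isomorphism. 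Dually, $\ev_A(a)(\zeta):=\zeta(a)$ is an algebra homomorphism which on a generator gives $\ev_A(u_w^b)(\zeta)=c_w^b\,\eps^w=\widehat{c_w^b}(\zeta)$, so $\ev_A$ carries the generators $u_w^b$ of $A$ to the generators $\widehat{c_w^b}$ of $\Gdual{(\algdual{A})}\simeq\cA(\algdual{A})$ (the latter identification by the first assertion applied to the graded space $\algdual{A}$) with matching degrees; since both are free polynomial algebras on these generators, $\ev_A$ is an isomorphism of graded algebras. Naturality of both families in $V$ and $A$ is automatic from the definition of evaluation.
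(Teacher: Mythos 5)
Your proof is correct and follows essentially the same route as the paper: decompose $\alpha\in\Gdual{V}$ into homogeneous polynomial components to identify $\Gdual{V}\simeq\cA(V)$, determine graded homomorphisms $A\to\mbA$ by their values $c^b_w\eps^w$ on generators to identify $\algdual{A}\simeq\R^\dd$ with its induced homogeneity structure, and check the evaluation maps in these coordinates. The only difference is that you justify the finiteness of the nonzero components via a Baire category argument, a detail the paper asserts without elaboration ("otherwise there would be a point $P$ with $f_j(P)\neq 0$ for infinitely many $j$"), so your write-up is if anything slightly more complete.
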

\begin{proof}
Any map $f\in\Gdual{V}$ is of the form $f(P) = \sum_{j=0}^{n(P)} f_j(P) \eps^j$, where $P\mapsto f_j(P)\in \R$ is a smooth homogenous function on $V$ of weight $j$, hence a polynomial in some graded coordinates $(y^i_w)$ on $V$ (Theorem \ref{theorem:1}). The number of integers $j$ such that $f_j$ is a nonzero polynomial has to be finite, otherwise there would be a point $P\in V$ such that $f_j(P)\neq 0$ for infinitely many $j$'s, contradicting the condition $f(P)\in \mbA$.
Identifying $f=\sum_j f_j \eps^j$ with a polynomial function $\sum_j f_j$ on $V$, where $f_j$ has necessarily weight $j$, we obtain the isomorphism
$$
\Gdual{V} \simeq \R[y^i_w]\ .
$$
Clearly the natural multiplication in $\Gdual{V}$ coincide with the polynomial multiplication in $\R[y^i_w]$, i.e. $\Gdual{V}$ is a graded polynomial algebra.  \smallskip

Conversely, observe that any graded algebra homomorphism $\psi: \R[y^i_w] \ra {\mbA}$ takes the form $\psi(y^i_w) = \lambda^i_w \eps^w$ for some $\lambda^i_w \in\R$. Since $y^i_w$ are generators of the graded polynomial algebra, numbers $(\lambda^i_w)$ completely determine the homomorphism $\psi$. Obviously, different choices of $(\lambda^i_w)$ lead to different homomorphism. Hence, $\algdual{A} \simeq \R^{|\dd|}$ as manifolds, for some $\dd$. The canonical homogeneity structure on $\algdual{A}$ is given in a coordinate-free way by
$$
h^{\algdual{A}}_t(\psi)(a) = h_t^{{\mbA}}(\psi(a)) = t^j \psi(a), \,\psi\in \algdual{A},
$$
if $a\in A^j$ is a homogenous element of $A$ of degree $j$. Clearly, if $\psi\in \algdual{A}$ is determined by $(\lambda^i_w)$, then $h^{\algdual{A}}_t(\psi)$ is a graded algebra homomorphism associated with $(t^w \lambda^i_w)$, i.e. we may treat $(\lambda^i_w)$ as graded coordinates. Hence indeed, $\algdual{A}$ is a graded space of rank $\dd$.

 Thus if $A = \Gdual{V} \simeq \R[y^i_w]$, any graded algebra homomorphism $\psi: A \ra {\mbA}$
 coincides with the evaluation homomorphism $\ev_V(P)$ for $P\sim (y^i_w=\lambda^i_w) \in V$, hence \eqref{eqn:V_star_star} follows. We prove \eqref{eqn:A_star_star} in a similar way.
\end{proof}

\begin{corollary}\label{cor:dual_functors}
The above functors $\Gdualfunctor$ and $\algdualfunctor$ establish an equivalence between the categories of graded spaces and the category opposite to the category of graded polynomial algebras.
\end{corollary}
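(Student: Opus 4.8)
The plan is to derive the equivalence formally, treating Lemma~\ref{lem:duals_to_graded_space} as the analytic engine. That lemma has already supplied everything non-formal: that $\Gdual{V}$ is a graded polynomial algebra, that $\algdual{A}$ is a graded space, and -- crucially -- that the evaluation maps $\ev_V\colon V\ra\algdual{(\Gdual{V})}$ and $\ev_A\colon A\ra\Gdual{(\algdual{A})}$ are isomorphisms. To promote these objectwise isomorphisms to an equivalence of categories, I would carry out three bookkeeping steps: check functoriality of $\Gdualfunctor$ and $\algdualfunctor$, record the objectwise isomorphisms as candidate units/counits, and verify their naturality.

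First I would confirm that $\Gdualfunctor$ and $\algdualfunctor$ are genuine functors. By Definition~\ref{def:duals_to_graded_spaces} the assignment $\alpha\mapsto\alpha\circ f$ defining $\Gdual{f}$ is a graded algebra homomorphism (the multiplication in $\Gdual{V}$ is computed pointwise in $\mbA$, so precomposition respects it), and dually $\algdual{\phi}$ is a morphism of graded spaces; hence morphisms are sent to morphisms of the correct type. Contravariance, $\Gdual{(g\circ f)}=\Gdual{f}\circ\Gdual{g}$ and $\Gdual{\id_V}=\id_{\Gdual{V}}$, is immediate from $\Gdual{f}(\alpha)=\alpha\circ f$ together with associativity of composition, and the same holds verbatim for $\algdualfunctor$. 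Thus $\Gdualfunctor$ is a functor from the category $\GS$ of graded spaces to the opposite of the category of graded polynomial algebras, and $\algdualfunctor$ runs the other way.

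The only step with genuine content is the naturality of the evaluation maps. For a morphism $f\colon V\ra W$ of graded spaces I would verify the commutativity of the square expressed by
\begin{equation*}
\algdual{(\Gdual{f})}\circ\ev_V=\ev_W\circ f,
\end{equation*}
by evaluating both sides at $P\in V$ and testing against an arbitrary $\alpha\in\Gdual{W}$. Unwinding the definitions gives
\begin{equation*}
\big(\algdual{(\Gdual{f})}(\ev_V(P))\big)(\alpha)=\ev_V(P)(\alpha\circ f)=\alpha(f(P))=\big(\ev_W(f(P))\big)(\alpha),
\end{equation*}
so the two sides agree. The analogous chase, using $\algdual{\phi}(\zeta)=\zeta\circ\phi$, establishes naturality of $\ev_A$ in $A$. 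Combining this with the functoriality of the first step and the bijectivity from Lemma~\ref{lem:duals_to_graded_space} yields the two required natural isomorphisms $\id_{\GS}\cong\algdualfunctor\circ\Gdualfunctor$ and $\id\cong\Gdualfunctor\circ\algdualfunctor$, which is precisely the assertion that $(\Gdualfunctor,\algdualfunctor)$ is an equivalence.

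I expect no serious obstacle, since the hard analysis -- the finiteness of the expansion $f=\sum_j f_j\eps^j$, the polynomiality of homogeneous functions via Theorem~\ref{theorem:1}, and the bijectivity of evaluation -- is already discharged in Lemma~\ref{lem:duals_to_graded_space}. The single point demanding care is the direction of arrows: because $\Gdualfunctor$ and $\algdualfunctor$ are contravariant, I must orient the naturality squares correctly, and observe that since $\ev_A$ is an \emph{isomorphism}, its naturality in the category of graded polynomial algebras is equivalent to naturality in the opposite category, so no separate verification is needed there.
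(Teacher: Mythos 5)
Your proposal is correct and follows essentially the same route as the paper: both rest on Lemma~\ref{lem:duals_to_graded_space} for the objectwise isomorphisms and then check that double-dualization of a morphism recovers the original morphism up to the evaluation identifications. Your coordinate-free naturality chase $\bigl(\algdual{(\Gdual{f})}(\ev_V(P))\bigr)(\alpha)=\alpha(f(P))$ is just an explicit spelling-out of what the paper verifies in coordinates, so there is nothing to add.
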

\begin{proof} It is easy to see that in a chosen coordinate systems $(y^i_w)$ and $(y^{i'}_{w'})$ on $V$ and $W$, respectively, $\algdual{(\Gdual{f})}$ coincides with $f: V\ra W$, and similarly
$\Gdual{(\algdual{\phi})}$ coincides with $\phi:A\ra B$, up to canonical identifications given by the evaluation maps. However, $\Gdual{f}$ and $\algdual{\phi}$ are defined in a coordinate-free way, hence the result follows.
\end{proof}

\subsection{Duality related with the model space $\A^{[k]}$}
\label{ssec:dual_gr space_k}

Now, using $\kmbA{k}=\R[\eps]/\ideal{k}$ as the model space, we shall introduce the second notion of duality for graded spaces.

\begin{definition}\label{def:duals_to_graded_spaces_k} Let $V$ be a graded space and let $A=\oplus_{i=0}^\infty A^i$ be a $\N$-graded commutative algebra.
Define the \emph{$\thh{k}$-dual of $V$ and $A$}, respectively, by
$$
\kGdual{V}{k} := \Hom_{\GG}(V, \kmbA{k}), \quad A\kalgdual{}{k}:={\Homgr}(A,\kmbA{k}),
$$
i.e. $\kGdual{V}{k}$ consists of smooth functions $V\ra \kmbA{k}$ intertwining the homogeneity structures on $V$ and $\kmbA{k}$, while  $\kalgdual{A}{k}$ consists of graded algebra homomorphisms $A\ra \kmbA{k}$.

Let now $f:V\ra W$ be a morphism of graded spaces and let $\phi:A\ra B$ be a graded algebra morphism.
We define the \emph{dual morphisms} $\kGdual{f}{k}:\kGdual{W}{k}\ra\kGdual{V}{k}$ of $f$ and $\kGdual{\phi}{k}:\kGdual{B}{k}\ra\kGdual{A}{k}$ of $\phi$ by
\begin{align*}
\kGdual{f}{k}(\alpha)&:= \alpha\circ f:V\ra \kmbA{k}\\
\kGdual{\phi}{k}(\zeta)&:= \zeta\circ \phi:A\ra \kmbA{k}\ ,
\end{align*}
for every homogeneous map $\alpha:W\ra\kmbA{k}$ and for every graded algebra morphisms $\zeta:B\ra\kmbA{k}$, respectively.
\end{definition}

 In order to formulate an analog of Lemma~\ref{lem:duals_to_graded_space} for $\thh{k}$ order duality, we need to recognize the type of algebras of the form $\kGdual{\V}{k}$, where $V$ is a graded space of degree $k$. This will lead to the notion of a \emph{free graded Weil algebra}.

 It is clear that any $f\in \kGdual{\V}{k}$ is of the form
 $$
 f = \sum_{j=0}^k f_j\,\eps^j \bmod\, \ideal{k},
 $$
 where $f_j$ is a homogenous functions on $V$ of weight $j$, thus polynomial in graded coordinates $(y^a_w)$ on $V$. The assignment $f\mapsto \sum_{j=0}^k f_j$ gives an algebra isomorphism $\kGdual{V}{k}\simeq\R[y^a_w]/I_{k}$, where $I_{k}$ is the ideal generated by all elements with weights greater than $k$. 
 Thus $\kGdual{V}{k}$ is a Weil algebra of a special kind.

\begin{definition}
\label{def:weil_alg}
A \emph{Weil algebra} is a finite-dimensional local algebra.
By \emph{graded Weil algebra} we will understand a Weil algebra equipped with a non-negative gradation.
\end{definition}

From the point of view of concrete applications the following characterization of Weil algebras is more useful.
\begin{lemma}[\cite{Kolar_weil_bund_gen_jet_sp_2008}, Proposition 1.5]
\label{lem:pres_weil_algebra}
A {Weil algebra} can be characterized as a finite-dimensional commutative associative and  unital algebra of the form $${A}=\R\oplus N_{A},$$
where $N_{A}$ is the ideal of all nilpotent elements of ${A}$.
The lowest number $r$ such that $N_{A}^{r+1}=0$ is called the \emph{order} of ${A}$  and the real dimension of $N_{A}/N_{A}^2$ is called the \emph{weight} of ${A}$.
Every Weil algebra ${A}$ of order $r$ and weight $N$ can be equivalently characterized as a quotient of the polynomial algebra:
\begin{equation}\label{eqn:presentation_weil}
{A}=\R[y^1,y^2,\hdots,y^N]/I,
\end{equation}
where $I$ is an ideal such that
$$\<y^1,\hdots,y^N>^2\supset I\supset \<y^1,\hdots,y^N>^{r+1}.$$
Moreover, as generators $y^1,\hdots,y^N$ we may take any elements of $N_{A}$ which establish a basis of $N_{A}/N_{A}^2$.
\end{lemma}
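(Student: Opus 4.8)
The plan is to prove the internal characterization $A=\R\oplus N_A$ first and then the polynomial presentation, using throughout that a finite-dimensional algebra is Artinian.

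For the decomposition I would let $\mathfrak m$ be the maximal ideal of the local algebra $A$. Since $A$ is Artinian and local, $\mathfrak m$ is nilpotent, so $\mathfrak m\subseteq N_A$; as every nilpotent element is a non-unit and hence lies in $\mathfrak m$, this forces $N_A=\mathfrak m$, an ideal with $N_A^{r+1}=0$ for a least $r$ (the order). The residue field $A/\mathfrak m$ is a finite extension of $\R$, and under the standing real convention for Weil algebras it is $\R$ (without it one could take $A=\C$, which is local but not of the asserted form); splitting the projection $A\to A/\mathfrak m=\R$ by $\R\cdot 1$ then gives $A=\R\cdot 1\oplus N_A$. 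The converse implication is immediate: in any $\R\oplus N$ with $N$ a nilpotent ideal, an element $c\cdot 1+n$ with $c\ne 0$ is invertible via the finite series $c^{-1}\sum_{i\ge 0}(-c^{-1}n)^{i}$, so the non-units are exactly $N$ and form a unique maximal ideal.

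For the presentation I would choose $y^1,\dots,y^N\in N_A$ whose classes form a basis of $N_A/N_A^2$, so $N=\dim_\R N_A/N_A^2$ is the weight. Applying Nakayama's lemma to the finitely generated $A$-module $N_A$ (noting $\mathfrak m\cdot N_A=N_A^2$) shows that the $y^i$ generate $N_A$ as an ideal; consequently the associated graded algebra $\bigoplus_m N_A^m/N_A^{m+1}$ is generated in degree one by their classes, products of the $y^i$ span $N_A$ over $\R$, and, together with $\R\cdot 1$, the $y^i$ generate $A$ as an algebra. Hence the substitution homomorphism $\phi\colon\R[y^1,\dots,y^N]\to A$ is surjective and $A\cong\R[y^1,\dots,y^N]/I$ with $I=\ker\phi$.

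It would then remain to locate $I$ between the stated powers of $\<y^1,\dots,y^N>$. Since $\phi$ maps $\<y^1,\dots,y^N>$ onto $N_A$ and $N_A^{r+1}=0$, it kills $\<y^1,\dots,y^N>^{r+1}$, giving $I\supseteq\<y^1,\dots,y^N>^{r+1}$. For the other inclusion I would take $P\in I$, write $P=c+\sum_i c_i y^i+(\text{terms of order}\ge 2)$, and read off $c=0$ by reducing $\phi(P)=0$ modulo $N_A$ and then $c_i=0$ by reducing modulo $N_A^2$, the latter using that the $y^i$ are a basis of $N_A/N_A^2$; thus $P$ has no constant or linear part and $I\subseteq\<y^1,\dots,y^N>^2$. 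I expect the delicate point to be exactly this use of a \emph{basis} (not a mere generating set) of $N_A/N_A^2$, which is what pins down the sharp inclusion $I\subseteq\<y^1,\dots,y^N>^2$ and the minimality of the generators; the two structural inputs carrying the argument are the nilpotence of $\mathfrak m$ (Artinian local) and Nakayama's lemma.
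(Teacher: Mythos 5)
The paper does not prove this lemma at all: it is imported verbatim as Proposition~1.5 of the cited reference on Weil bundles, so there is no in-paper argument to compare yours against. Your proof is correct and is the standard one (Artinian local $\Rightarrow$ nilpotent maximal ideal, hence $N_A=\mathfrak m$; Nakayama to promote a basis of $N_A/N_A^2$ to algebra generators; reduction modulo $N_A$ and $N_A^2$ to trap $I$ between $\<y^1,\hdots,y^N>^2$ and $\<y^1,\hdots,y^N>^{r+1}$). Your remark that the residue field must be $\R$ (ruling out $A=\C$) is a genuinely useful precision, since the paper's Definition~\ref{def:weil_alg} says only ``finite-dimensional local algebra'' and is literally weaker than the characterization $A=\R\oplus N_A$ without that real-residue-field convention.
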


Note that if $A=A_0\oplus {A}_1\oplus\hdots \oplus{A}_k$ is a graded Weil algebra, then we can choose the generators $y^1,y^2,\hdots,y^N$ to be homogeneous with respect to the grading. Indeed, obviously for every $i>0$ we have $A_i\subset N_A$, since $(A_i)^{k+1}\subset A_{i(k+1)}=0$. We conclude that
$$N_{A}=N_A\cap\left(A_0\oplus A_1\oplus\hdots\oplus A_k\right)=(A_0\cap N_A)\oplus{A}_1\oplus{A}_2\oplus\hdots\oplus{A}_k\ ,$$
i.e. $N_{A}$ is a graded ideal. Hence,
the quotient space $V=N_{A}/(N_{A})^2$ is naturally graded, $V=V^0\oplus V^1\oplus\cdots\oplus V^r$, $r\le k$, and hence we can choose homogeneous elements $y^1,\hdots,y^N\in A$ representing its basis (i.e, the generators of $A$).

\begin{definition}\label{def:free_weil_alg}
A graded Weil algebra of order $k$ is called \emph{free} if it contains a family of homogeneous generators $y^1,\dots,y^N\in A$ such that a non-zero graded-homogeneous polynomial
$P(y^1,\dots,y^N)$ in graded variables $y^i$ is zero in $A$ if and only if it is of graded-degree greater than $k$.
In other words, the generators $y^1,\dots,y^N$ are as free as it is possible in $A$ (they are not a subject of any relation of order $\leq k$).  We will call them \emph{free Weil generators}.
\end{definition}

\begin{proposition} Every free graded Weil algebra $A=A_0\oplus {A}_1\oplus\hdots \oplus{A}_k$ of degree $k$ is isomorphic to the quotient algebra $$\polyA^{[k]}(\R^\dd)=\polyA(\R^\dd)/\polyA^{{>k}}(\R^\dd)$$
of the algebra $\polyA(\R^\dd)$ of homogeneous functions on the graded space $\R^\dd$ for some $\dd$. In particular, the degree $0$ part of $A$ is $1$-dimensional, i.e. $N_A={A}_1\oplus\hdots \oplus{A}_k$.
\end{proposition}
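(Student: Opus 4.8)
The plan is to present $A$ as a quotient of a polynomial algebra on homogeneous generators of strictly positive degree, and then to identify the relevant ideal directly from the freeness hypothesis. First I would invoke Lemma~\ref{lem:pres_weil_algebra} together with the discussion preceding Definition~\ref{def:free_weil_alg}: since $A=A_0\oplus A_1\oplus\cdots\oplus A_k$ is a graded Weil algebra, we have $A=\R\oplus N_A$ with $N_A$ a \emph{graded} ideal, and we may choose homogeneous free Weil generators $y^1,\dots,y^N\in N_A$ which form a basis of $N_A/N_A^2$ and generate $A$ as an algebra. Thus $A\cong\R[y^1,\dots,y^N]/I$ for a graded ideal $I$ with $\langle y^1,\dots,y^N\rangle^{k+1}\subseteq I\subseteq\langle y^1,\dots,y^N\rangle^2$, where the variables carry the weights $\w(y^i)=\deg y^i$.

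The first genuine step is to rule out generators of degree $0$, which simultaneously gives the ``in particular'' claim. Suppose some homogeneous generator $y^i$ had degree $0$. As an element of $N_A$ it is nilpotent, so $(y^i)^m=0$ in $A$ for some $m\geq 2$ (note $y^i\neq 0$, being a basis element of $N_A/N_A^2$). But $(y^i)^m$ is a nonzero graded-homogeneous polynomial of graded-degree $0\leq k$, and by Definition~\ref{def:free_weil_alg} such a polynomial cannot vanish in $A$ --- a contradiction. Hence every $y^i$ has degree $\geq 1$. Since the $y^i$ generate $A$ and carry positive degree, every element of $N_A$ is a combination of monomials of graded-degree $\geq 1$, so $N_A\subseteq A_1\oplus\cdots\oplus A_k$; combined with $A=\R\oplus N_A$ this forces $A_0=\R$ and $N_A=A_1\oplus\cdots\oplus A_k$.

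Next, writing $d_i$ for the number of generators of degree $i$ and $\dd=(d_1,\dots,d_k)$, the polynomial algebra $\R[y^1,\dots,y^N]$, graded by $\w(y^i)=\deg y^i\geq 1$, is a connected $\N$-graded polynomial algebra, hence by Proposition~\ref{prop:graded_generators} graded-isomorphic to $\polyA(\R^\dd)$ (indeed the $y^i$ serve directly as graded coordinates). The assignment of each variable to the corresponding generator gives a surjective graded algebra homomorphism $\pi:\polyA(\R^\dd)\to A$, and it remains to compute $\ker\pi=I$. Because $\pi$ is graded, $I=\oplus_i I^i$ with $I^i=I\cap\polyA^i(\R^\dd)$. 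The freeness condition in Definition~\ref{def:free_weil_alg} says precisely that a nonzero graded-homogeneous polynomial in the $y^i$ vanishes in $A$ exactly when its degree exceeds $k$; therefore $I^i=0$ for $i\leq k$ and $I^i=\polyA^i(\R^\dd)$ for $i>k$, i.e. $I=\polyA^{>k}(\R^\dd)$. This yields $A\cong\polyA(\R^\dd)/\polyA^{>k}(\R^\dd)=\polyA^{[k]}(\R^\dd)$, as claimed.

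The main obstacle, conceptually, is the second step: excluding degree-$0$ generators and thereby proving $A_0=\R$. Once this is in place, the remainder is a direct translation of the freeness hypothesis, which was designed precisely so that the kernel of $\pi$ coincides with the ideal of polynomials of graded-degree $>k$. The only substantive input beyond bookkeeping is the observation that a nonzero degree-$0$ element of $N_A$ would be a nilpotent generator, contradicting the absence of relations of order $\leq k$.
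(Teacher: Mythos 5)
Your proof is correct and follows essentially the same route as the paper: present $A$ as a quotient of the graded polynomial algebra on the homogeneous free Weil generators and identify the kernel with $\polyA^{>k}(\R^\dd)$ via the freeness condition. You additionally spell out why no generator can have degree $0$ (a nilpotent degree-$0$ generator would contradict freeness), a point the paper's proof leaves implicit when it sets $\dd=(d_1,\dots,d_k)$.
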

\begin{proof}
Indeed, assume $A$ is a free graded Weil algebra of order $k$ generated by free Weil generators $y^1, \ldots, y^N$ of degrees $w_1,\dots,w_N$. Of course, $w_i\le k$.
Assuming that each variable $x^i$ is of weight $w_i$, consider a graded algebra homomorphism $\phi: \R[x^1, \ldots, x^N]\ra A$  that sends $x^i$ to $y^i$. Such an homomorphism exists, since the polynomial algebra is free and, moreover, $\phi$ is surjective, as $y^i$ generate $A$. Clearly, the kernel of $\phi$ coincides with the ideal $I_k$ generated by polynomials of weights grater than $k$. Hence $A$ is isomorphic to  $\R[x^1, \ldots, x^N]/I_k\simeq \polyA^{[k]}(\R^\dd)$, where $\dd=(d_1,\dots,d_k)$ and $d_i=\#\{ 1\le j\le N\,|\, w_j=i\}$.
\end{proof}

 Note that a free graded Weil algebra $A$ of order $k$ has the following universal property: it contains a subset $Y=\{y^1, \ldots, y^N\}$ of its homogenous free Weil generators with the assigned weights $w_1, \ldots, w_N$ such that for any graded Weil algebra $B=\oplus_{i=0}^k B_i$ of order $k$ and any mapping $\iota: y^i\mapsto b^i$, such that $b^i\in B_{w_i}$, there exist a unique graded algebra homomorphism $f: A \ra B$ such that $f(y^i) = b^i$. The justification of this fact is analogues to the argument used in the the above proof. Thus we may think of a free graded Weil algebra of order $k$ as a free object in the category of graded Weil algebras of order less or equal $k$.

Now we are ready to formulate an analog of Lemma \ref{lem:duals_to_graded_space}: functors $\kGdualfunctor{k}$ and $\kalgdualfunctor{k}$ establish an equivalence between the category of graded spaces of degree $k$ and the category opposite to the category of free graded Weil algebras of order $k$.

\begin{lemma}\label{lem:star_star_k}
Let $V$ be a graded space of degree $k$, and let $A=\R[y^i_w]/I_{k}$ be a free graded Weil algebra. Then $\kGdual{V}{k}$ is a free graded Weil algebra of order $k$, while $\kalgdual{A}{k}$ carries a natural graded space structure of degree $k$. Moreover, the evaluation maps give rise to  natural isomorphisms
\begin{equation}\label{eqn:V_star_star_k}
\ev_V: V \ra \kalgdual{(\kGdual{V}{k})}{k}
\end{equation}
of homogeneity structures, and
\begin{equation}\label{eqn:A_star_star_k}
\ev_A: A\ra \kGdual{(\kalgdual{A}{k})}{k}
\end{equation}
of graded algebras.
\end{lemma}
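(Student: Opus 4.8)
The plan is to run the proof of Lemma~\ref{lem:duals_to_graded_space} again, this time with the model algebra $\mbA$ replaced by its truncation $\kmbA{k}$, the new ingredient being that the degree-$k$ cutoff is harmless precisely because $A$ is \emph{free}. The first assertion, that $\kGdual{V}{k}$ is a free graded Weil algebra of order $k$, is already essentially contained in the discussion preceding the statement: any $f\in\kGdual{V}{k}$ has the form $f=\sum_{j=0}^k f_j\,\eps^j\bmod\ideal{k}$ with $f_j$ homogeneous of weight $j$ on $V$ (and hence polynomial in graded coordinates $(y^a_w)$ by Theorem~\ref{theorem:1}), and the assignment $f\mapsto\sum_j f_j$ identifies $\kGdual{V}{k}$ with $\R[y^a_w]/I_k=\kpolyA{k}(\R^\dd)$. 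By the preceding Proposition the latter is a free graded Weil algebra of order $k$; I would only add the remark, exactly as in the first-dual case, that the multiplication transported from $\kmbA{k}$ agrees with the truncated polynomial multiplication.

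For the second assertion I would mimic the second paragraph of the proof of Lemma~\ref{lem:duals_to_graded_space}. Writing $A=\R[y^i_w]/I_k$ with homogeneous generators $y^i_w$ of weight $w\le k$, a graded homomorphism $\psi:A\ra\kmbA{k}$ must carry the degree-$w$ generator $y^i_w$ into the degree-$w$ part of $\kmbA{k}$, which is the line $\R\cdot\eps^w$; thus $\psi(y^i_w)=\lambda^i_w\,\eps^w\bmod\ideal{k}$ for some $\lambda^i_w\in\R$, and since $w\le k$ we have $\eps^w\neq 0$ in $\kmbA{k}$, so distinct tuples $(\lambda^i_w)$ give distinct homomorphisms. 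The step I expect to be the main obstacle is the converse: that \emph{every} tuple $(\lambda^i_w)$ actually extends to a homomorphism. This is exactly where freeness enters. By Definition~\ref{def:free_weil_alg} the only relations among the $y^i_w$ are of graded degree $>k$, and these are automatically satisfied in $\kmbA{k}$ because $\eps^{k+1}=0$; had $A$ carried a genuine relation of degree $\le k$, the image of $\kalgdual{A}{k}$ would be a proper subvariety of $\R^{|\dd|}$ and the lemma would fail. Granting this, $\kalgdual{A}{k}\simeq\R^{|\dd|}$ as a manifold, with homogeneity structure $h_t(\psi)(a)=h_t^{\kmbA{k}}(\psi(a))=t^j\psi(a)$ for $a\in A^j$, which in the coordinates $(\lambda^i_w)$ reads $(\lambda^i_w)\mapsto(t^w\lambda^i_w)$; hence $\kalgdual{A}{k}$ is a graded space of rank $\dd$ and degree $k$.

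Finally, for the evaluation isomorphisms I would simply specialize the two coordinate descriptions above. Taking $A=\kGdual{V}{k}\simeq\R[y^i_w]/I_k$, a graded homomorphism $\psi:A\ra\kmbA{k}$ is recorded by a tuple $(\lambda^i_w)$, and this is precisely $\ev_V(P)$ for the point $P\in V$ with coordinates $y^i_w=\lambda^i_w$; this yields the bijection \eqref{eqn:V_star_star_k}, and compatibility with the homogeneity structures is immediate from the coordinate formula $(\lambda^i_w)\mapsto(t^w\lambda^i_w)$. The isomorphism \eqref{eqn:A_star_star_k} follows symmetrically. As in Corollary~\ref{cor:dual_functors}, since $\ev_V$ and $\ev_A$ are defined in a coordinate-free way yet coincide with the identity in the chosen coordinates, they are canonical and no separate naturality argument is required.
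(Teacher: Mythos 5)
Your proposal is correct and follows exactly the route the paper intends: the paper omits this proof as ``analogous to the proof of Lemma \ref{lem:duals_to_graded_space}'', and you carry out precisely that analogous argument, with the useful extra observation (implicit in the paper's universal-property remark for free graded Weil algebras) that freeness of $A$ is what guarantees every tuple $(\lambda^i_w)$ extends to a homomorphism into $\kmbA{k}$, since the only relations are of degree $>k$ and are killed by $\eps^{k+1}=0$.
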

Proof is analogous to the proof of Lemma \ref{lem:duals_to_graded_space} and can be omitted. Also Corollary \ref{cor:dual_functors} has its direct analog.

\begin{corollary}
The above functors $\kGdualfunctor{k}$ and $\kalgdualfunctor{k}$ establish an equivalence between the categories of graded spaces of degree at most $k$ and the category opposite to the category of free Weil algebras of order at most $k$.
\end{corollary}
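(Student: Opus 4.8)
The plan is to mirror the argument of Corollary~\ref{cor:dual_functors}, now feeding in Lemma~\ref{lem:star_star_k} in place of Lemma~\ref{lem:duals_to_graded_space}. The substantive work is already contained in that lemma: it tells us that $\kGdualfunctor{k}$ sends a graded space of degree at most $k$ to a free graded Weil algebra of order at most $k$, that $\kalgdualfunctor{k}$ sends such an algebra back to a graded space of degree at most $k$, and that the evaluation maps $\ev_V$ and $\ev_A$ of \eqref{eqn:V_star_star_k} and \eqref{eqn:A_star_star_k} are isomorphisms. What remains is purely formal: recording functoriality and checking that the evaluation maps assemble into natural transformations.

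First I would record functoriality. Both $\kGdualfunctor{k}$ and $\kalgdualfunctor{k}$ act on morphisms by precomposition, so they reverse the direction of arrows and manifestly respect composition and identities; for instance $\kGdual{(g\circ f)}{k}=\kGdual{f}{k}\circ\kGdual{g}{k}$, since precomposition satisfies $(g\circ f)^{\ast}=f^{\ast}\circ g^{\ast}$. This shows $\kGdualfunctor{k}$ and $\kalgdualfunctor{k}$ are genuine contravariant functors between the two categories in question.

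Next comes the heart of the matter: the evaluation maps are natural. For a morphism $f:V\ra W$ of graded spaces I would verify the naturality square
\[
\kalgdual{(\kGdual{f}{k})}{k}\circ \ev_V = \ev_W\circ f,
\]
which is a direct coordinate-free computation: for $P\in V$ and $\beta\in \kGdual{W}{k}$ one has $\big(\kalgdual{(\kGdual{f}{k})}{k}(\ev_V(P))\big)(\beta)=\ev_V(P)(\beta\circ f)=\beta(f(P))=\ev_W(f(P))(\beta)$. The analogous identity for an algebra morphism $\phi:A\ra B$ follows in the same way. Hence $\ev$ furnishes natural isomorphisms $\kalgdualfunctor{k}\circ\kGdualfunctor{k}\cong \id$ and $\kGdualfunctor{k}\circ\kalgdualfunctor{k}\cong\id$, which is exactly the assertion that the pair of functors is an equivalence of categories. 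Note that this coordinate-free check is actually cleaner than the coordinate argument invoked in Corollary~\ref{cor:dual_functors}, and I would prefer it.

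Finally, a word on the degree bookkeeping, which is the only place requiring a little care rather than pure formalism: one must check that the two functors respect the bound ``at most $k$'', not merely the case of exact degree $k$ addressed in Lemma~\ref{lem:star_star_k}. If $V=\R^{\dd}$ has degree $j\le k$, every $f\in\kGdual{V}{k}$ is still of the form $\sum_{j'=0}^{k}f_{j'}\,\eps^{j'}\bmod\ideal{k}$ with $f_{j'}$ homogeneous of weight $j'$, so $\kGdual{V}{k}\simeq\R[y^a_w]/I_k$ is a free graded Weil algebra of order at most $k$, and symmetrically on the algebra side. Granting this, the ``at most $k$'' statement is immediate. I expect no real obstacle here: the entire content of the corollary has been pushed into Lemma~\ref{lem:star_star_k}, and what is left is the standard verification that a dualizing-object construction with invertible evaluation maps yields an equivalence.
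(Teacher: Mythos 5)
Your proposal is correct and follows essentially the same route as the paper, which likewise treats this corollary as the direct analogue of Corollary~\ref{cor:dual_functors} with Lemma~\ref{lem:star_star_k} supplying all the substance, the only remaining task being to check that the double dual of a morphism coincides with the morphism via the evaluation maps. Your coordinate-free verification of the naturality square (and the brief remark on the ``at most $k$'' bookkeeping) is a minor stylistic refinement of the paper's coordinate argument, not a different method.
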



\section{Locally finite-rank vector spaces and bundles}
\label{sec:fin_gr_bund}

All graded vector spaces  in this paper are $\N$-graded and \emph{locally finite-dimensional}, i.e. they are of the form
\begin{equation}\label{eqn:loc_fin_dim_vect_sp}
W=\bigoplus_{i=0}^\infty W^i\,,\quad \dim(W^i)<\infty.
\end{equation}
Recall that such a grading we call \emph{connected} if the part of weight $0$ is one-dimensional.
We shall always assume a natural product topology on $W$ defined as the coarsest topology
 in which projection maps $\pi_j: W \ra W^j$ are continuous.
For locally finite dimensional graded vector spaces $V, W$  the \emph{tensor product}
$$
V\otimes W := \bigoplus_{k\geq 0} \bigoplus_{i+j=k} V^i \otimes W^j
$$ is also a locally finite-dimensional graded vector space. The \emph{graded dual} vector space of $W$,
 $$W^*:=\oplus_{i=0}^\infty(W^i)^*$$
is also a locally finite-dimensional graded vector space, and there is a canonical isomorphism
\begin{equation}\label{eqn:tensor_product}
(V\otimes W)^\ast \simeq V^\ast \otimes W^\ast.
\end{equation}
Note that the graded dual $W^*$ coincides with the space of all continuous linear maps $W\ra \R$.
\medskip

To prepare a suitable ground for the notion of duality for graded bundles, we need to learn how to work with vector bundles whose fibers are infinite-dimensional, e.g. polynomial algebras. Fortunately, in our case these fibers are locally finite-dimensional graded vector spaces, so we can easily reduce to finite dimensions.

Given a sequence of finite-rank vector bundles $(E^j)_{j\geq 0}$ over the same base $M$ we may consider
$$
E = \bigcup_{x\in M} E_x, \quad E_x = \bigoplus_{j=0}^\infty (E^j)_x
$$
as a vector bundle over $M$ by assuming the natural product  topology on fibers. The obtained bundle $W$ will be called a \emph {locally finite-rank vector bundle}. Constructions of the tensor product and of the graded dual can be done  fiber-wise for locally finite-rank  vector bundles and the isomorphism \eqref{eqn:tensor_product} is still valid. We often will be interested in the graded space of
\emph{polynomial sections} $\PSec(E)$ defined as
$$\PSec(E):=\oplus_{i=0}^\infty\Sec(E^i)\,.$$
Our standard example of a locally finite-rank connected graded vector bundles will be the following.

\begin{proposition}\label{propA} With every graded bundle $F\to M$ we can canonically associate
a locally finite-rank connected graded vector bundle
$$A(F):=\bigoplus_{i=0}^\infty A^i(F)\,,$$
where $A^i(F)$ is the vector bundle corresponding to
the locally-free finitely generated $C^\infty(M)$-module $\cA^i(F)$ of homogeneous functions on $F$ of degree $i$.
In particular, polynomial sections of $A(F)$ coincide with polynomial functions on $F$,
\begin{equation}\label{polsec}
\PSec(A(F))=\mathcal{A}(F)\,.
\end{equation}
\end{proposition}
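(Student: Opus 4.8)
The plan is to reduce everything to the local normal form of Example~\ref{e1} via Theorem~\ref{theorem:1}, show that each $\cA^i(F)$ is locally free of finite constant rank, assemble the bundles $A^i(F)$ into a locally finite-rank graded vector bundle, and read off \eqref{polsec} tautologically.

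First I would fix a trivialising chart. By Theorem~\ref{theorem:1}, around any point of $M$ there is an open set $U$ and graded coordinates $(x^A,y^a_w)$ identifying $F|_U$ with $U\times\R^\dd$ carrying the homogeneity structure $h^\dd$. A smooth function on $F|_U$ is homogeneous of degree $i$ if and only if, in these coordinates, it is a polynomial in the fibre variables $y^a_w$ of total weight $i$, with coefficients in $C^\infty(U)$ (every homogeneous function on $\R^\dd$ is polynomial). Since each weight satisfies $w\geq 1$ and the number of fibre coordinates is finite, there are only finitely many monomials in the $y^a_w$ of total weight $i$; these monomials form a finite $C^\infty(U)$-basis of $\cA^i(F)|_U$. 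Hence $\cA^i(F)$ restricted to $U$ is a free $C^\infty(U)$-module of rank $r_i:=\dim\cA^i(\R^\dd)$, a number depending only on $\dd$ and $i$.

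Next I would promote this to a genuine vector bundle. Because $\dd$ is the fibre type of the whole bundle, the rank $r_i$ is constant on $M$, so the intrinsically defined module $\cA^i(F)$ is locally free of constant finite rank over $C^\infty(M)$. To see that the local monomial frames glue, note that the transition between two graded charts is a graded isomorphism covering the identity on $U\cap U'$; being degree-preserving and polynomial with smooth coefficients in the fibre variables, it carries each weight-$i$ monomial to a weight-$i$ polynomial with coefficients in $C^\infty(U\cap U')$, so the two frames are related by an invertible $C^\infty(U\cap U')$-linear matrix. These matrices satisfy the cocycle condition and are the transition functions of a finite-rank vector bundle $A^i(F)\to M$, whose fibre over $x$ is $\cA^i(F_x)$ (the degree-$i$ homogeneous functions on the graded space $F_x$, recovered as $\cA^i(F)/\mathfrak{m}_x\cA^i(F)$) and whose module of sections is exactly $\cA^i(F)$. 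Equivalently one may simply invoke the Serre--Swan correspondence: a locally free, finitely generated $C^\infty(M)$-module is the section module of a unique finite-rank vector bundle. For $i=0$ one has $\cA^0(F)=C^\infty(M)$ and $r_0=1$, whence $A^0(F)=M\times\R$, which is exactly the connectedness of the grading.

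Finally I would assemble the pieces. The sequence $(A^i(F))_{i\geq 0}$ of finite-rank bundles over $M$ defines, in the sense of Section~\ref{sec:fin_gr_bund}, the locally finite-rank connected graded vector bundle $A(F)=\bigoplus_{i=0}^\infty A^i(F)$ with the product topology on fibres. The identity \eqref{polsec} is then immediate: by construction $\Sec(A^i(F))=\cA^i(F)$, so
$$\PSec(A(F))=\bigoplus_{i=0}^\infty\Sec(A^i(F))=\bigoplus_{i=0}^\infty\cA^i(F)=\cA(F)\,.$$
The only genuinely technical point is the gluing/finite-generation step, namely that the abstractly defined $\cA^i(F)$ is globally locally free of finite constant rank; the essential inputs are the local normal form of Theorem~\ref{theorem:1} and the fact that graded coordinate changes preserve degree and are fibrewise polynomial with smooth coefficients. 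Everything else is bookkeeping with weight-$i$ monomials.
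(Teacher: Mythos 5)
Your argument is correct and is precisely the one the paper intends: the paper states Proposition~\ref{propA} without proof, treating it as an immediate consequence of the local normal form of Theorem~\ref{theorem:1}, the fact that homogeneous functions on $\R^\dd$ are polynomial, and the definitions of Section~\ref{sec:fin_gr_bund}. Your write-up simply makes explicit the standard details (finiteness of the set of weight-$i$ monomials, constancy of rank, gluing of monomial frames via degree-preserving polynomial transition maps, and the tautological identification $\PSec(A(F))=\bigoplus_i\Sec(A^i(F))=\cA(F)$) that the authors leave implicit.
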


Before we discuss the categories of locally finite-rank graded vector bundles, let us recall that
for vector bundles we have, in principle, two types of morphisms. The first are the `standard'
morphisms used to define the category of vector bundles. The other, which we call \emph{Zakrzewski morphisms} (Definition \ref{ZM}), are dual relations to the standard vector bundle morphisms (cf. \cite{HM,MJ_MR_higher_alg_2015}).

\begin{definition}\label{def:morphisms_inf_dim}
Let $E\ra M$ and $F\ra N$ be two locally finite-rank vector bundles. A \emph{standard (graded) morphism} $\phi: E\ra F$ is given by a family of smooth vector bundle morphism $\phi_i : E_i \ra F_i$, $i\in\N$ over the same base map $\underline{\phi}:M\ra N$.

Similarly, a \emph{Zakrzewski morphisms} $\psi: E \relto F$ is given by a family of smooth Zakrzewski morphisms $\psi_i: E_i\relto F_i$ over the same base map $\underline{\psi}:M\ra N$. Clearly maps $(\psi_i)^\ast: (E_i)^\ast\ra (F_i)^\ast$ define a standard morphism $\psi^\ast: E^\ast\ra F^\ast$ between the graded duals of $E$ and $F$, i.e. the concepts of a standard morphisms and of a Zakrzewski morphism are interchangeable for locally finite-rank vector bundles under the operation of taking the graded dual.
\end{definition}

\begin{remark}\label{rem:ZM}
In the literature the concept of a \emph{Zakrzewski morphism} is known under various names, e.g. it is also called a \emph{comorphism}. The name \emph{Zakrzewski morphism} is justified by the fact that this type of a morphism is a very particular case of the Lie groupoid morphism first defined by Zakrzewski in \cite{Zakrz}. These morphism were discovered by him as the ones appearing in a natural concept of `quantization' understood as a functor from the category of Lie groupoids (with Zakrzewski morphisms!) into the category of $C^*$-algebras. The corresponding $C^*$-algebras are algebras of \emph{bi-densities} with the convolution product (see e.g. \cite{Stach}). In our opinion, this constructions surely deserves much more attention than what you can find in the literature. A more detailed discussion of this concept and its relation with the original ideas of Zakrzewski can be found in the appendix of \cite{MJ_MR_higher_alg_2015}.

\medskip
It is worth to mention that the {concepts of both a standard morphism and} a Zakrzewski morphism can be naturally applied to {(locally finite-dimensional)} vector bundles with additional algebraic structures. For instance, if $E\ra M$ and $F\ra N$ are, say, algebra bundles, then a Zakrzewski morphism between $E$ and $F$ is a relation of type \eqref{eqn:Zakrzewski_morphism} such that for each $x\in N$ the restriction $f|_{E_{f_0(x)}}:E_{f_0(x)}\ra F_x$ is an algebra morphism.
\end{remark}

We will say that a locally finite-rank graded vector bundle $\tau:E\ra M$ \emph{is modeled} on a locally finite-dimensional graded vector space $V=\oplus_{i=0}^\infty V^i$ if every $E^i$ is modeled on $V^i$, so we have a family of vector bundle isomorphisms $E^i\,|_{U_\alpha}\simeq U_\alpha\times V^i$ for an open covering $\{ U_\alpha\}_{\alpha\in\Lambda}$ of $M$. Of course, we can build $E$ from $U_\alpha\times V^i$ provided
the cocycles associated with local trivializations $\tau^{-1}(U_\alpha)\simeq U_\alpha\times V$ are (smooth) automorphisms of $V$. Of course, $g_{\alpha\beta}:U_\alpha\cap U_\beta\to Aut(V)$ is
smooth means that each $(g_{\alpha\beta})_i:U_\alpha\cap U_\beta\to Aut(V^i)$ is smooth.

\section{Duality for graded bundles}\label{ssec:dual_universal}
\medskip
All algebra bundles which will appear in the context of duality for graded spaces will be modeled on locally finite-dimensional commutative associative graded and connected algebras with unit over the field $\R$ of reals. Obviously, their graded duals are locally finite-dimensional cocommutative coassociative graded coalgebras with counit.

More precisely: given an algebra structure on a connected  graded locally finite-dimensional vector space $A = \oplus_{i=0}^\infty A^i$ with multiplication defined by a family of maps
$\mu_{k}: \bigoplus_{i+j=k} A^i\otimes A^j \ra A^{k}$, one can define a comultiplication $\Delta: C\otimes C\to C$, where $C=A^*$ is the graded dual of $A$, by setting $\Delta = \sum \Delta_k$, where $\Delta_k: C_k \ra \bigoplus_{i+j=k} C_i\otimes C_j$, $C_k=(A^k)^*$, and $\Delta_k = \mu_k^*$, is the dual of the finite dimensional vector space map $\mu_k$. Note that the unit of $A$ has to be homogenous, $1\in A^0$, hence the counit  of $C$  is simply the projection on $C_0$. {In conclusion}, thanks to \eqref{eqn:tensor_product}, the {concepts} of an algebra and a coalgebra are interchangeable within the category of connected  graded locally finite dimensional vector spaces.

\begin{example}\label{ex:gr_poly_coalg} (a graded polynomial coalgebra)
Let $A=\mathcal{A}(\R^\dd)\simeq\R[y^a]$, where $(y^a)_{a\in \zL}$, are homogenous generators of weight $\w(a)$, {be a graded polynomial algebra}. Let $C=A^\ast$ be the graded dual of $A$. Set an order on the index set $\zL$ and let $\{Y_{a_1\ldots a_j}^w\}$, where $a_i\in \zL$, $a_1\leq\ldots\leq a_j$ and $\w(a_1)+\ldots \w(a_j)=w$, be a  basis of $C_w$, which is dual to the basis $y^{a_1}\ldots y^{a_j}$ of $A^w$. Then the formula for the comultiplication in $C$ induced by the multiplication in the polynomial algebra $A$ is
$$
\Delta(Y_{a_1\ldots a_k}) = \sum_{i+j=k}\sum_{\sigma\in\operatorname{Sh}(i,j)} Y_{a_{\sigma(1)}\ldots a_{\sigma(i)}}\otimes Y_{a_{\sigma(i+1)}\ldots a_{\sigma(k)}}
$$
where the summation is over all shuffles $\sigma\in \operatorname{Sh}(i,j) \subset \Sgroup_k$, $\sigma(1)<\ldots <\sigma(i)$, $\sigma(i+1)<\ldots <\sigma(k)$. {The coalgebra obtained in this way} will be called a \emph{graded polynomial coalgebra} and denoted by $\mathcal{A}(\R^\dd)^\ast$.
\end{example}

The concept of a {graded} algebra (resp., coalgebra) bundle is almost obvious (cf. \cite{PP}).

\begin{definition} Let $A$ (resp., $C$) be {a locally finite-dimensional $\N$-graded}
algebra (resp., coalgebra). In particular, $A$ (resp. $C$) can be any finite-dimensional algebra (resp. coalgebra).
A {locally finite-rank} vector bundle $\tau:\bA\to M$ modeled on $A$ {(resp. $\tau: \bm{C}\to M$ modeled on $C$)} is called a \emph{{graded} algebra bundle} (resp., \emph{{graded} coalgebra bundle}) modeled on a graded algebra $A$  (resp., {a graded coalgebra} $C$) if  the cocycles associated with local trivializations $\tau^{-1}(U_\alpha)\simeq U_\alpha\times A$ (resp., $\tau^{-1}(U_\alpha)\simeq U_\alpha\times C$) are automorphisms of the graded algebra (resp., {graded} coalgebra) structures, $g_{\alpha\beta}:U_\alpha\cap U_\beta\to Aut(A)$ {(resp. $g_{\alpha}:U_\alpha\cap U_\beta\to Aut(C)$).
}

We say that a {graded} algebra (coalgebra) bundle is a \emph{graded polynomial algebra bundle} (resp., a \emph{graded polynomial coalgebra bundle}) if $A$ {(resp., $C$)} is isomorphic with a {graded} polynomial algebra {$\mathcal{A}(\R^\dd)$} (resp., a {graded} polynomial coalgebra {$\mathcal{A}(\R^\dd)^\ast$}).
\end{definition}

{The latter objects, supplemented with proper classes of morphisms {introduced in Definition \ref{def:morphisms_inf_dim}}, will play a crucial role in our duality theory for graded bundles.}

\begin{definition}\label{def:cat_gpab}
By the \emph{category of graded polynomial algebra bundles}, denoted $\GPB^\ast$, we shall understand the category whose objects are graded polynomial algebra bundles, and whose morphisms are Zakrzewski morphisms of the related vector bundles, respecting the graded algebra structures on fibers.

By the \emph{category of graded polynomial coalgebra bundles}, denoted $\GPCB$, we shall understand the category whose objects are graded polynomial coalgebra bundles, and whose morphisms are standard morphism of the related vector bundles, respecting the graded coalgebra structures on fibers.
\end{definition}

The following is straightforward.
\begin{proposition}\label{prop:star_alg_coalg}
The categories $\GPB^\ast$  and $\GPCB$ are naturally isomorphic. The isomorphism is given by passing to the {graded} dual objects and morphisms.
\end{proposition}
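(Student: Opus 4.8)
The plan is to establish the isomorphism of categories $\GPB^\ast \cong \GPCB$ by exhibiting the functor explicitly on both objects and morphisms and checking functoriality, then verifying that it is invertible. The essential input is already available: for locally finite-rank graded vector bundles we have a fibre-wise graded dual, and by Definition~\ref{def:morphisms_inf_dim} the operations of taking the graded dual interchange standard morphisms and Zakrzewski morphisms. The whole statement is ``local in the fibre'' in the sense that every structure is built fibre-wise from the linear-algebra duality between finite-dimensional graded algebras and coalgebras discussed just before Example~\ref{ex:gr_poly_coalg}. So first I would fix the object assignment: to a graded polynomial algebra bundle $\bA\to M$, modeled on $\mathcal{A}(\R^\dd)$, associate its graded dual bundle $\bA^\ast\to M$, whose fibre is the graded dual coalgebra $\mathcal{A}(\R^\dd)^\ast$. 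One must check that $\bA^\ast$ is genuinely a graded polynomial coalgebra bundle in the sense of Definition~\ref{ZM}-style local triviality: the transition cocycles $g_{\alpha\beta}:U_\alpha\cap U_\beta\to \operatorname{Aut}(\mathcal{A}(\R^\dd))$ dualize fibre-wise to cocycles $(g_{\alpha\beta})^\ast$ with values in $\operatorname{Aut}(\mathcal{A}(\R^\dd)^\ast)$, and since dualization reverses the order of composition one works with $((g_{\beta\alpha})^\ast)$ or equivalently transposes, so the cocycle condition is preserved. This is where a small bookkeeping point lives, but it is purely formal.

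Next I would treat morphisms. Given a Zakrzewski morphism $f:\bA \relto \bB$ in $\GPB^\ast$, respecting the graded algebra structure on fibres, Definition~\ref{def:morphisms_inf_dim} already tells us that the fibre-wise duals assemble into a standard vector bundle morphism $f^\ast:\bB^\ast\to\bA^\ast$. What must be verified is that $f^\ast$ respects the graded \emph{coalgebra} structure on fibres: but since each fibre restriction of $f$ is an algebra homomorphism, its linear dual is a coalgebra homomorphism, this being exactly the elementary statement that the transpose of an algebra map is a coalgebra map for the dual comultiplication $\Delta_k=\mu_k^\ast$ defined above. Hence $f^\ast$ is a morphism in $\GPCB$. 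The assignment clearly reverses no arrows here (the Zakrzewski morphism $\bA\relto\bB$ covers $N\to M$, and its dual $\bB^\ast\to\bA^\ast$ covers the same base map), so we really obtain a covariant functor $\GPB^\ast \to \GPCB$; functoriality, i.e. $(g\circ f)^\ast=f^\ast\circ g^\ast$ together with $\id^\ast=\id$, follows fibre-wise from the corresponding identities for finite-dimensional linear duals.

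To finish I would exhibit the inverse functor $\GPCB\to\GPB^\ast$ by the symmetric construction: send a graded polynomial coalgebra bundle $\bm{C}$ to its graded dual $\bm{C}^\ast$, which by \eqref{eqn:tensor_product} is again a graded polynomial algebra bundle, and send a standard coalgebra-bundle morphism to its dual Zakrzewski algebra-bundle morphism. The key point making these two functors mutually inverse is the canonical fibre-wise double-dual isomorphism: for a locally finite-dimensional connected graded vector space $W$ the natural evaluation map $W\to (W^\ast)^\ast$ is an isomorphism because each finite-dimensional piece $W^i$ satisfies $W^i\cong (W^{i\ast})^\ast$ canonically. These evaluation isomorphisms are natural in $W$ and compatible with the transition cocycles, hence they glue to a natural isomorphism of bundles $\bA\cong (\bA^\ast)^\ast$ intertwining the algebra and (double-dual) coalgebra structures, and similarly $\bm C\cong (\bm C^\ast)^\ast$. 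I expect the main — though still mild — obstacle to be the careful treatment of these infinite-rank subtleties: one must confirm that taking the graded dual commutes with the product topology on the fibres and that the double-dual evaluation remains an isomorphism in the locally finite-rank setting rather than only on each graded piece. Since every construction is a direct sum of finite-dimensional duals and the isomorphism \eqref{eqn:tensor_product} is stated to persist for locally finite-rank bundles, these checks reduce to the finite-dimensional case degree by degree, which is why the proposition is indeed ``straightforward.''
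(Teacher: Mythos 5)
Your argument is correct and is exactly the fibre-wise dualization the paper has in mind: the paper offers no proof at all, declaring the proposition straightforward, and your write-up supplies precisely the routine checks (dualized transition cocycles, transposes of algebra maps being coalgebra maps for $\Delta_k=\mu_k^\ast$, and the degree-by-degree double-dual evaluation) that justify that claim. The one point to tidy is the direction bookkeeping: you write the dual of $f:\bA\relto\bB$ as $f^\ast:\bB^\ast\to\bA^\ast$ and yet call the functor covariant, whereas to obtain an isomorphism (rather than an anti-isomorphism) of categories you should follow the convention of Definition~\ref{def:morphisms_inf_dim}, in which the graded dual of a Zakrzewski morphism $\psi:E\relto F$ is the standard morphism $\psi^\ast:E^\ast\to F^\ast$ in the same direction.
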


The constructions of duality for graded spaces from Section \ref{ssec:duality_gr_spaces} can be applied fiber by fiber to graded bundles, leading to the concept of duality for graded bundles.

\subsection{{Duality related with the model object $\mbA$}}
\begin{definition}
\label{def:dual_bundle}
Let $F$ be a graded bundle over base $M$. By the \emph{$\GG$-dual} of $F$ we will understand
the graded algebra bundle $\Gdual{F}$ over $M$, where
$$
\left(\Gdual{F}\right)_x:=\Gdual{(F_x)}=\Hom_\GG(F_x,\mbA)
$$
is the {graded} algebra of all homogeneous maps from $F_x$ to $\mbA$, $x\in M$.

For a homogeneous map (graded morphism) $f:F\ra E$ between graded bundles $F\ra M$ and $E\ra N$, covering the base map $\ul{f}:M\to N$, we define its \emph{$\GG$-dual} $\Gdual{f}:\Gdual{E}\relto \Gdual{F}$ to be a Zakrzewski morphism, covering $\ul{\Gdual{f}}=\ul{f}:M\to N$ and defined on fibers by
$$(\Gdual{f})_x=\Gdual{(f_x)}:\Gdual{(E_{\ul{f}(x)})}\ra \Gdual{(F_x)}$$
for every $x\in M$.
\end{definition}

The following is straightforward  (c.f. \eqref{polsec}).
\begin{proposition}
For any graded bundle $F$ of rank $\dd$, its $\GG$-dual $\Gdual{F}$ is a  graded associative algebra bundle,
$$\Gdual{F}=\bigoplus_{i=0}^\infty \Gdual{F}^{i}\,,
$$
with the typical fiber being the graded polynomial algebra $\cA(\R^\dd)$. Polynomial sections of
$\Gdual{F}$ can be canonically identified with the algebra $\mathcal{A}(F) = \bigoplus_{i \in \mathbb{N}}\mathcal{A}^{i}(F)$ of polynomial functions on $F$.
\end{proposition}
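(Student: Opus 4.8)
The plan is to build $\Gdual{F}$ by applying the fibre-wise duality of Section~\ref{ssec:duality_gr_spaces} to each fibre of $F$ and then to check that the results glue smoothly into a graded algebra bundle. First I would pin down the fibres. Since $F$ has rank $\dd$, every fibre $F_x$ is a graded space diffeomorphic to $\R^\dd$ by Theorem~\ref{theorem:1}, so Lemma~\ref{lem:duals_to_graded_space} tells us that each $\left(\Gdual{F}\right)_x = \Gdual{(F_x)} = \Hom_\GG(F_x,\mbA)$ is a graded polynomial algebra isomorphic to $\cA(\R^\dd)$. This simultaneously supplies the fibre-wise associative commutative algebra structure and settles the claim about the typical fibre. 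The grading $\cA(\R^\dd) = \bigoplus_i \cA^i(\R^\dd)$ into finite-dimensional homogeneous pieces then produces the decomposition $\Gdual{F} = \bigoplus_i \Gdual{F}^i$ with each $\Gdual{F}^i$ of finite rank.

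Next I would equip $\Gdual{F}$ with its bundle structure. Over a trivialising chart $U_\alpha$ a graded-bundle isomorphism $F|_{U_\alpha} \simeq U_\alpha \times \R^\dd$ induces, by applying the contravariant functor $\Gdualfunctor$ fibre by fibre, a trivialisation $\Gdual{F}|_{U_\alpha} \simeq U_\alpha \times \cA(\R^\dd)$. The transition maps of $F$ are fibre-wise graded-space automorphisms of $\R^\dd$ depending smoothly on the base point and, being graded morphisms, polynomial in the fibre coordinates (cf.\ the remark following Theorem~\ref{theorem:1}); dualising them yields the transition maps of $\Gdual{F}$, which are therefore automorphisms of the graded algebra $\cA(\R^\dd)$. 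Reading everything degree by degree places $\Gdual{F}$ squarely within the framework of locally finite-rank connected graded vector bundles from Section~\ref{sec:fin_gr_bund}, and identifies it with the bundle $A(F)$ of Proposition~\ref{propA}: under the identification of Lemma~\ref{lem:duals_to_graded_space} the fibre of $\Gdual{F}^i$ over $x$ is exactly the space $\mathcal{A}^i(F_x)$ of homogeneous degree-$i$ functions on the fibre.

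With the identification $\Gdual{F} = A(F)$ in hand the last assertion is immediate: by definition $\PSec(\Gdual{F}) = \bigoplus_i \Sec(\Gdual{F}^i)$, and a section of $\Gdual{F}^i$ is precisely a smoothly varying family of homogeneous degree-$i$ maps $F_x \to \mbA$, that is, a homogeneous degree-$i$ function on the total space $F$. Hence $\Sec(\Gdual{F}^i) \simeq \mathcal{A}^i(F)$ and $\PSec(\Gdual{F}) \simeq \mathcal{A}(F)$, which is exactly the identification \eqref{polsec}.

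The main obstacle is the passage from this fibre-wise recipe to a genuinely smooth bundle, since the model fibre $\cA(\R^\dd)$ is infinite-dimensional and one cannot verify smoothness in a single stroke. The resolution is the degree-by-degree reduction to the finite-rank summands $\Gdual{F}^i$, together with the observation that the smooth graded transition cocycles of $F$ act by precomposition as smooth families of graded algebra automorphisms on each finite-dimensional piece $\cA^i(\R^\dd)$; this is precisely the situation the locally finite-rank formalism of Section~\ref{sec:fin_gr_bund} is designed to accommodate.
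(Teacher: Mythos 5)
Your argument is correct and is exactly the expansion of what the paper leaves implicit: the paper dismisses this proposition as ``straightforward'' with a pointer to \eqref{polsec}, and your route --- fibre-wise application of Lemma~\ref{lem:duals_to_graded_space}, degree-by-degree gluing via the dualised transition cocycles within the locally finite-rank framework of Section~\ref{sec:fin_gr_bund}, and the identification $\Gdual{F}\simeq A(F)$ from Proposition~\ref{propA} to get $\PSec(\Gdual{F})\simeq\mathcal{A}(F)$ --- is precisely the intended one. No gaps.
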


\begin{definition}
Let now $\bm{A}$ be a graded polynomial algebra bundle over base $M$.
By the \emph{$\GG$-dual} of $\bm{A}$ we will understand the graded bundle $\algdual{{\bm{A}}}$ over $M$, where
$$
\left(\algdual{\bm{A}}\right)_x:=\algdual{(\bm{A}_x)}={\Homgr}(\bm{A}_x,\mbA)
$$
is the graded space of all graded algebra homomorphisms from $\bm{A}_x$ to $\mbA$, $x\in M$.

For a Zakrzewski morphism $\phi:\bm{B}\relto \bm{A}$ between graded polynomial algebra bundles $\bm{A}\ra M$ and $\bm{B}\ra N$, covering the base map $\ul{\phi}:M\to N$, we define its \emph{$\GG$-dual} $\algdual{\phi}:\algdual{\bm{A}}\ra \algdual{\bm{B}}$ to be a morphism covering $\ul{\algdual{\phi}}=\ul{\phi}:M\to N$ defined on fibers by $$(\algdual{\phi})_x:=\algdual{(\phi_x)}:\algdual{(\bm{A}_{x})}\ra \algdual{(\bm{B}_{\ul{\phi}(x)})}$$
for every $x\in M$.
\end{definition}

\begin{proposition}\label{prop:duality}
The operation $\Gdualfunctor$ is a contravariant functor from the category $\GB$ of graded bundles into the category $\GPB^\ast$ of graded polynomial algebra bundles.

Similarly, the operation $\algdualfunctor$ is a contravariant functor from the category $\GPB^\ast$ of graded polynomial algebra bundles to the category $\GB$ of graded bundles.
\end{proposition}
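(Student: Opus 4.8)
The plan is to note that well-definedness on objects is already in hand: by Proposition \ref{propA} together with the two propositions immediately preceding the statement, $\Gdual F$ is a graded polynomial algebra bundle and $\algdual{\bm A}$ is a graded bundle. Hence only two things remain, and I would carry them out for $\Gdualfunctor$ first, the argument for $\algdualfunctor$ being the mirror image: (i) that the dual of a morphism is again a morphism of the target category, the sole nontrivial point here being \emph{smoothness}, since the fibrewise algebraic content is supplied by Lemma \ref{lem:duals_to_graded_space}; and (ii) functoriality, i.e. preservation of identities and of composition in the contravariant sense.

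First I would dispose of (ii), which is formal. Both operations are defined fibrewise, $(\Gdual f)_x=\Gdual{(f_x)}$ for a graded morphism $f:F\to E$ covering $\ul f:M\to N$. From $\Gdual{(\id_{F_x})}=\id$ we get $\Gdual{\id_F}=\id_{\Gdual F}$, and from the graded-space equivalence of Corollary \ref{cor:dual_functors} we get $\Gdual{(g_{\ul f(x)}\circ f_x)}=\Gdual{(f_x)}\circ \Gdual{(g_{\ul f(x)})}$ for composable $f,g$. The only care needed is with the base maps: following Definition \ref{ZM}, the Zakrzewski morphism $\Gdual f:\Gdual E\relto\Gdual F$ covers $\ul f$ in the reversed sense, so the categorical composite $\Gdual f\circ\Gdual g$ covers $\ul g\circ\ul f$ and restricts over $x\in M$ exactly to $\Gdual{(f_x)}\circ\Gdual{(g_{\ul f(x)})}$; this matches $\Gdual{(g\circ f)}$, giving $\Gdual{(g\circ f)}=\Gdual f\circ\Gdual g$. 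The identical bookkeeping yields functoriality of $\algdualfunctor$.

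The main point, and the only place where more than fibrewise algebra enters, is smoothness. By Definition \ref{def:morphisms_inf_dim} smoothness of a Zakrzewski morphism of locally finite-rank bundles is equivalent to smoothness of the graded-dual standard morphism, which is tested component by component on the finite-rank pieces $\Gdual{F}^{i}\simeq A^i(F)$. I would therefore argue locally: over a trivialising neighbourhood $U$ write $F|_U\simeq U\times\R^{\dd_F}$ and $E|_{\ul f(U)}\simeq \ul f(U)\times\R^{\dd_E}$, so that $f$ reads $(x,v)\mapsto(\ul f(x),f_x(v))$ with $f_x:\R^{\dd_F}\to\R^{\dd_E}$ a graded morphism, polynomial in $v$ with coefficients smooth in $x$. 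Fibrewise $\Gdual f$ is the pullback $\alpha\mapsto\alpha\circ f_x$; restricted to homogeneous functions of a fixed degree $i$ it is a linear map $\cA^i(\R^{\dd_E})\to\cA^i(\R^{\dd_F})$ whose matrix entries are polynomial expressions in the (smooth) coefficients of $f_x$. Thus each component is a smooth finite-rank vector bundle morphism, $\Gdual f$ is a smooth Zakrzewski morphism, and it respects the fibre algebra structures by the fibrewise content of Lemma \ref{lem:duals_to_graded_space}; the local descriptions patch because the transition cocycles are graded algebra automorphisms. Hence $\Gdual f$ is a morphism of $\GPB^\ast$.

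For $\algdualfunctor$ the mirror argument applies: a Zakrzewski morphism $\phi:\bm B\relto\bm A$ is locally a smooth family of graded algebra homomorphisms, its $\GG$-dual $\algdual\phi$ is fibrewise $\zeta\mapsto\zeta\circ\phi_x$, and in the coordinates $(\lambda^i_w)$ on $\algdual{\bm A}$ furnished by the proof of Lemma \ref{lem:duals_to_graded_space} this is again polynomial with smoothly varying coefficients, hence a smooth graded bundle morphism. The step I expect to be the genuine obstacle is precisely this reduction of smoothness to the finite-rank components, together with the correct handling of the reversed base maps when composing Zakrzewski morphisms; once the locally finite-rank formalism of Section \ref{sec:fin_gr_bund} is available, no further difficulty arises.
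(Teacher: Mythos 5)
Your proposal is correct and follows essentially the same route as the paper: the paper's proof likewise reduces everything to the fibrewise statement of Lemma \ref{lem:duals_to_graded_space} and then observes that $\Gdual{f}$ is a Zakrzewski morphism because it is a collection of finite-rank Zakrzewski morphisms $(\Gdual{f})_i$ over $\ul{f}$, exactly as in Definition \ref{def:morphisms_inf_dim}. You simply spell out the smoothness check in local coordinates and the composition bookkeeping, which the paper treats as immediate from the definitions.
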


\begin{proof}
W have already proved {an analogous result for graded spaces} and graded polynomial algebras (Lemma \ref{lem:duals_to_graded_space}). All we need to check is that if $f: F\ra E$ is a morphism in $\GB$, then $\Gdual{f}: \Gdual{E}\relto \Gdual{F}$ is a Zakrzewski morphism. But this follows from the very definition of Zakrzewski morphism between locally finite-rank vector bundles, since $\Gdual{f}$ is given by a collection of Zakrzewski morphisms $(\Gdual{f})_i:(\Gdual{E})_i \relto (\Gdual{F})_i$ between finite-rank vector bundles covering the same map $\und{f}: M\ra N$.
\end{proof}

\begin{theorem}\label{thm:duality}
The pair of functors $\Gdualfunctor$ and $\algdualfunctor$ establishes an equivalence between the category $\GB$ of graded bundles and the category $\GPB^\ast$ of graded polynomial algebra bundles.
\end{theorem}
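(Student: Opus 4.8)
The plan is to upgrade the fiber-wise equivalence of Lemma~\ref{lem:duals_to_graded_space} to an equivalence of the bundle categories. Since Proposition~\ref{prop:duality} already provides the two contravariant functors $\Gdualfunctor:\GB\ra\GPB^\ast$ and $\algdualfunctor:\GPB^\ast\ra\GB$, it remains only to produce natural isomorphisms $\ev_F:F\ra\algdual{(\Gdual{F})}$ in $\GB$ and $\ev_{\bm A}:\bm A\ra\Gdual{(\algdual{\bm A})}$ in $\GPB^\ast$ and to check that they assemble into natural transformations between the composite functors and the respective identities. Note that both composites $\algdualfunctor\circ\Gdualfunctor$ and $\Gdualfunctor\circ\algdualfunctor$ are \emph{covariant} endofunctors (each reverses arrows twice), so these naturality statements are meaningful.

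First I would define the evaluation maps fiber by fiber, setting $(\ev_F)_x:=\ev_{F_x}:F_x\ra\algdual{(\Gdual{(F_x)})}$ and $(\ev_{\bm A})_x:=\ev_{\bm A_x}:\bm A_x\ra\Gdual{(\algdual{(\bm A_x)})}$, using the isomorphisms \eqref{eqn:V_star_star} and \eqref{eqn:A_star_star} of Lemma~\ref{lem:duals_to_graded_space} applied to the individual fibers. Each $(\ev_F)_x$ is an isomorphism of graded spaces and each $(\ev_{\bm A})_x$ an isomorphism of graded algebras. To see that these collections are morphisms in $\GB$ and $\GPB^\ast$ over $\id_M$, I would work in a local trivialization $\tau^{-1}(U)\simeq U\times\R^\dd$ of $F$; there $\Gdual{F}$ trivializes as $U\times\cA(\R^\dd)$ and $\algdual{(\Gdual{F})}$ as $U\times\R^\dd$, and in these trivializations $\ev_F$ becomes the constant map $\id_U\times\ev_{\R^\dd}$, where $\ev_{\R^\dd}$ is the fixed isomorphism of Lemma~\ref{lem:duals_to_graded_space}. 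Being constant along the base and a fixed graded-space isomorphism along the fiber, it is manifestly a smooth graded bundle isomorphism. The local expressions glue to a global one precisely because the transition cocycles of $F$ are graded automorphisms of $\R^\dd$ and $\ev$ is natural with respect to such automorphisms. The same argument, with algebra automorphisms of $\cA(\R^\dd)$ in place of graded-space automorphisms, shows that $\ev_{\bm A}$ is a well-defined isomorphism in $\GPB^\ast$.

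Next I would verify naturality. Given a graded bundle morphism $f:F\ra E$ over $\und f:M\ra N$, I must show that $\algdual{(\Gdual{f})}\circ\ev_F=\ev_E\circ f$. Over a point $x\in M$ this equation restricts to the fibers $F_x$ and $E_{\und f(x)}$, where, since $(\algdual{(\Gdual{f})})_x=\algdual{(\Gdual{(f_x)})}$, it is exactly the naturality of the evaluation isomorphism for graded spaces established in Corollary~\ref{cor:dual_functors} (there $\algdual{(\Gdual{f_x})}$ is identified with $f_x$ under the evaluation maps). Hence the square commutes fiber-wise, and therefore globally. The analogous computation, using a Zakrzewski morphism $\phi:\bm B\relto\bm A$ together with the identification of $\Gdual{(\algdual{\phi_x})}$ with $\phi_x$, shows that $\ev_{\bm A}$ is natural in $\bm A$.

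Together these data exhibit $\ev$ as a natural isomorphism $\id_{\GB}\Rightarrow\algdualfunctor\circ\Gdualfunctor$ and $\id_{\GPB^\ast}\Rightarrow\Gdualfunctor\circ\algdualfunctor$, which is precisely what is required for the pair $(\Gdualfunctor,\algdualfunctor)$ to be an equivalence of categories. The one genuinely non-formal point, and the step I expect to require the most care, is the smoothness and global consistency of the bundle evaluation maps: one must be sure that the fiber-wise isomorphisms of Lemma~\ref{lem:duals_to_graded_space} vary smoothly and are compatible with the transition functions. This is handled by the local-trivialization argument above, whose whole force rests on the \emph{naturality} of $\ev$ for graded spaces, so that the constant local model $\id_U\times\ev_{\R^\dd}$ is independent of the chosen trivialization up to the structure cocycle. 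Once this is in place, the remaining verifications are the routine diagram chases indicated above.
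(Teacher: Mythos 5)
Your proposal is correct and follows essentially the same route as the paper, which simply states that the theorem ``follows directly from an analogous result for graded spaces and graded polynomial algebras (Lemma~\ref{lem:duals_to_graded_space}).'' You have merely spelled out the details the paper leaves implicit: the fiber-wise evaluation isomorphisms, their smoothness and gluing via local trivializations, and the naturality checks via Corollary~\ref{cor:dual_functors}.
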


The proof follows directly from an analogous result for graded spaces and graded polynomial algebras (Lemma \ref{lem:duals_to_graded_space}).

We will speak about the above pair of {functors} as establishing a \emph{duality} between the category of graded bundles and the category of graded polynomial algebra bundles. It is analogous to the `duality' between compact topological spaces and commutative unital $C^*$-algebras related to the Gel'fand-Naimark theorem {(see Remark \ref{rem:Gelfand_Naimark}).}

Recall that, according to Proposition \ref{prop:star_alg_coalg}, the categories $\GPB^\ast$ and $\GPCB$ are isomorphic by means of graded duality. Thus we may consider also $\GPCB$ as a category naturally dual to $\GB$. The duality functor $\odualfunctor:\GB\ra\GPCB$ in this case is simply the composition of $\Gdualfunctor:\GB\ra \GPB^\ast$ with the graded duality functor $\ast:\GPB^\ast\ra\GPCB$. A certain advantage of this point of view is that with $\odualfunctor$ we can avoid talking about Zakrzewski morphisms.

\begin{theorem}\label{thm:duality1}
The functor $\odualfunctor$ establishes an equivalence between the categories $\GB$ and $\GPCB$.
\end{theorem}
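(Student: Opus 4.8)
The plan is to deduce Theorem~\ref{thm:duality1} formally from the two results that precede it, since $\odualfunctor$ is by construction a composite of functors whose categorical behaviour is already understood. Recall that, by definition, $\odualfunctor=\ast\circ\Gdualfunctor$, where $\Gdualfunctor\colon\GB\ra\GPB^\ast$ is the $\GG$-duality functor and $\ast\colon\GPB^\ast\ra\GPCB$ is the graded-duality functor that sends a graded polynomial algebra bundle to its graded dual (a graded polynomial coalgebra bundle) and, \emph{via} the prescription of Definition~\ref{def:morphisms_inf_dim}, converts a Zakrzewski morphism of algebra bundles into the associated standard morphism of the dual coalgebra bundles.

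First I would invoke Theorem~\ref{thm:duality}: the pair $(\Gdualfunctor,\algdualfunctor)$ is an equivalence between $\GB$ and $\GPB^\ast$, the unit and counit being the fibrewise evaluation isomorphisms \eqref{eqn:V_star_star} and \eqref{eqn:A_star_star} of Lemma~\ref{lem:duals_to_graded_space}, read bundle-wise over the common base $M$. In particular $\Gdualfunctor$ admits the quasi-inverse $\algdualfunctor$. Next I would invoke Proposition~\ref{prop:star_alg_coalg}, according to which $\ast\colon\GPB^\ast\ra\GPCB$ is an \emph{isomorphism} of categories; its inverse is again graded duality $\GPCB\ra\GPB^\ast$, the identity $(A^\ast)^\ast\simeq A$ on connected locally finite-dimensional graded vector spaces (a consequence of \eqref{eqn:tensor_product}) guaranteeing that the two passages to the graded dual are mutually inverse on objects and on morphisms.

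With these two ingredients the conclusion is immediate from the elementary categorical fact that the composite of an equivalence with an isomorphism of categories is again an equivalence. Explicitly, a quasi-inverse of $\odualfunctor$ is $\algdualfunctor\circ\ast^{-1}\colon\GPCB\ra\GB$, and the required natural isomorphisms $(\algdualfunctor\circ\ast^{-1})\circ\odualfunctor\simeq\id_{\GB}$ and $\odualfunctor\circ(\algdualfunctor\circ\ast^{-1})\simeq\id_{\GPCB}$ are obtained by transporting the evaluation isomorphisms of Theorem~\ref{thm:duality} along the isomorphism $\ast$. I do not expect a genuine obstacle here: the entire substance has already been discharged in Theorem~\ref{thm:duality} and Proposition~\ref{prop:star_alg_coalg}. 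The single point deserving a moment's attention is the bookkeeping of variance and of morphism-types — one must check that $\Gdualfunctor$ reverses graded-bundle morphisms into Zakrzewski morphisms of algebra bundles while $\ast$ turns these back into \emph{standard} morphisms of coalgebra bundles, so that $\odualfunctor$ indeed takes values in $\GPCB$ equipped with the standard morphisms of Definition~\ref{def:morphisms_inf_dim}, exactly as the statement demands.
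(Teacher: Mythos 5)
Your argument is correct and coincides with the paper's (implicit) proof: the text preceding Theorem~\ref{thm:duality1} defines $\odualfunctor$ precisely as the composite of the equivalence $\Gdualfunctor$ from Theorem~\ref{thm:duality} with the category isomorphism $\ast$ from Proposition~\ref{prop:star_alg_coalg}, and the theorem is left to follow from exactly this composition. Your additional remarks on the quasi-inverse and on the variance/morphism-type bookkeeping only make explicit what the paper leaves tacit.
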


\subsection{{Duality related with the model object $\kmbA{k}$}}

Analogously to our considerations from the previous paragraph, the notion of duality related with the model object $\kmbA{k}=\R[\eps]/{\langle\eps^{k+1}\rangle}$ (cf. Definition \ref{def:duals_to_graded_spaces_k}) can be extended fiber by fiber to graded bundles and graded algebra bundles.

\begin{definition}
\label{def:dual_bundle_k} Let $F$ be a graded bundle over base
$M$. By the \emph{$\thh{k}$-dual} of $F$ we will understand the
graded algebra bundle $\kGdual{F}{k}$ over $M$, where
$$
\left(\kGdual{F}{k}\right)_x:=\kGdual{(F_x)}{k}=\Hom_\GG(F_x,\kmbA{k})
$$
is the graded algebra of all homogeneous maps from $F_x$ to $\kmbA{k}$, $x\in M$.

For a homogeneous map (graded morphism) $f:F\ra E$ between graded bundles $F\ra M$ and $E\ra N$, covering the base map $\ul{f}:M\to N$ we define its \emph{$\thh{k}$-dual} $\kGdual{f}{k}:\kGdual{E}{k}\relto \kGdual{F}{k}$ to be a Zakrzewski morphism covering $\ul{\kGdual{f}{k}}=\ul{f}:M\to N$ defined on fibers by $(\kGdual{f}{k})_x=\kGdual{(f_x)}{k}:\kGdual{(E_{\ul{f}(x)})}{k}\ra \kGdual{(F_x)}{k}$ for every $x\in M$.
\end{definition}
\begin{definition}
Let now $\bm{A}$ be a graded polynomial algebra bundle over base $M$. By the \emph{$\thh{k}$-dual} of $\bm{A}$ we will understand the graded bundle $\kalgdual{{\bm{A}}}{k}$ over $M$, where
$$
\left(\kalgdual{\bm{A}}{k}\right)_x:=\kalgdual{(\bm{A}_x)}{k}={\Homgr}(\bm{A}_x,\kmbA{k})
$$
is the graded space of all graded algebra homomorphisms from $\bm{A}_x$ to $\kmbA{k}$, $x\in M$.

For a Zakrzewski morphism $\phi:\bm{B}\relto \bm{A}$ between graded polynomial algebra bundles $\bm{A}\ra M$ and $\bm{B}\ra N$, covering the base map $\ul{\phi}:M\to N$ we define its \emph{$\thh{k}$-dual} $\kalgdual{\phi}{k}:\kalgdual{\bm{A}}{k}\ra \kalgdual{\bm{B}}{k}$ to be a morphism covering $\ul{\kalgdual{\phi}{k}}=\ul{\phi}:M\to N$ defined on fibers by $\left(\kalgdual{\phi}{k}\right)_x:=\kalgdual{\left(\phi_x\right)}{k}:\kalgdual{\left(\bm{A}_{x}\right)}{k}\ra \kalgdual{\left(\bm{B}_{\ul{\phi}(x)}\right)}{k}$ for every $x\in M$.
\end{definition}

In light of our considerations from Subsection \ref{ssec:dual_gr space_k} the following is straightforward.
\begin{proposition}
Let $\tau:F\ra M$ be a graded bundle with a typical fiber $\R^{\dd}$. The $\GG$-dual of $F$ of degree $k$, $\kGdual{F}{k}$, is a locally trivial graded associative algebra fibration, $$\kGdual{F}{k}=\bigoplus_{i=0}^{k}\Gdual{F}^{i}\,,$$
with the typical fiber being the free Weil algebra $\cA^{[k]}(\R^\dd)=\cA(\R^\dd)/\cA^{>k}(\R^\dd)$.
\end{proposition}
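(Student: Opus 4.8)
The plan is to reduce the statement to the fibre-wise results of Section \ref{ssec:duality_gr_spaces} and then glue. By Theorem \ref{theorem:1} the graded bundle $F$ admits an atlas $\{U_\alpha\}$ over which it is isomorphic, as a graded bundle, to the trivial model $U_\alpha\times\R^\dd$ of Example \ref{e1}, the transition functions being smooth maps $g_{\alpha\beta}:U_\alpha\cap U_\beta\to\operatorname{Aut}_\GG(\R^\dd)$ valued in the group of $\GG$-equivariant (homogeneity-preserving) diffeomorphisms of $\R^\dd$. Since by Definition \ref{def:dual_bundle_k} the bundle $\kGdual{F}{k}$ is built fibre by fibre as $\left(\kGdual{F}{k}\right)_x=\Hom_\GG(F_x,\kmbA{k})$, applying the $\thh{k}$-dual over each chart will produce local trivialisations $\kGdual{F}{k}\big|_{U_\alpha}\simeq U_\alpha\times\kGdual{(\R^\dd)}{k}$.

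First I would pin down the typical fibre. For a single graded space of degree $k$, the computation preceding Definition \ref{def:weil_alg} together with Lemma \ref{lem:star_star_k} shows that every $f\in\Hom_\GG(\R^\dd,\kmbA{k})$ is uniquely of the form $f=\sum_{j=0}^{k}f_j\,\eps^j$ with $f_j\in\cA^j(\R^\dd)$ homogeneous of weight $j$, and that $f\mapsto\sum_{j=0}^{k}f_j$ is a graded algebra isomorphism $\kGdual{(\R^\dd)}{k}\simeq\R[y^a_w]/I_k=\cA^{[k]}(\R^\dd)$ onto the free graded Weil algebra of order $k$. Reading off weights in this normal form immediately yields the graded decomposition: the weight-$i$ part of $\kGdual{(F_x)}{k}$ is exactly $\cA^i(F_x)=\left(\Gdual{F}\right)^i_x$ for $0\le i\le k$ and vanishes for $i>k$. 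Gluing these identifications over $M$ gives the asserted equality of graded vector bundles $\kGdual{F}{k}=\bigoplus_{i=0}^{k}\Gdual{F}^i$, and in particular exhibits $\kGdual{F}{k}$ as a finite-rank vector bundle modeled on $\cA^{[k]}(\R^\dd)$.

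The remaining and essentially only non-formal point is to check that these trivialisations assemble into a genuine graded associative (unital) algebra bundle. Here I would use functoriality of the $\thh{k}$-dual (the bundle analogue of Lemma \ref{lem:duals_to_graded_space}, established fibre-wise): on overlaps the change of trivialisation is given by $x\mapsto\kGdual{(g_{\alpha\beta}(x))}{k}$, and since each $g_{\alpha\beta}(x)$ is a graded diffeomorphism, its $\thh{k}$-dual is a graded algebra automorphism of $\cA^{[k]}(\R^\dd)$, with the cocycle identity preserved (up to inverses) by the contravariant functor. Because $\kmbA{k}$ is finite-dimensional, smoothness in $x$ is the ordinary one; it follows from smoothness of $g_{\alpha\beta}$, since the matrix of the dual map $\kGdual{(g_{\alpha\beta}(x))}{k}$ on $\cA^{[k]}(\R^\dd)$ is a polynomial expression in the (smoothly varying) coefficients of the polynomial components of $g_{\alpha\beta}(x)$. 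Thus the transition functions are smooth $\operatorname{Aut}(\cA^{[k]}(\R^\dd))$-valued cocycles, so $\kGdual{F}{k}$ is a locally trivial graded associative algebra fibration with typical fibre $\cA^{[k]}(\R^\dd)$, as claimed. I expect this gluing-and-smoothness verification to be the only real obstacle; all the algebraic content is already contained in the single-fibre statement of Lemma \ref{lem:star_star_k}.
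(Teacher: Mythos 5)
Your proposal is correct and follows exactly the route the paper intends: the paper states this proposition without proof, calling it ``straightforward'' from the fibre-wise considerations of Subsection \ref{ssec:dual_gr space_k} (in particular the normal form $f=\sum_{j=0}^k f_j\eps^j$ and Lemma \ref{lem:star_star_k}), and your argument simply supplies the details of that reduction. The only content you add beyond what the paper records is the explicit verification that the transition cocycles $x\mapsto\kGdual{(g_{\alpha\beta}(x))}{k}$ are smooth algebra automorphisms, which is a correct and welcome elaboration rather than a departure.
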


Naturally, such objects can be can be formalized within the following category.

\begin{definition}
The \emph{category of free graded Weil algebra bundles}, denoted $\FGWAB^\ast$, will be understand as the category whose objects are graded algebra bundles modeled on a free graded Weil algebra and whose morphisms are Zakrzewski morphisms of the related vector bundle structures, respecting the graded algebra structures on fibers. Its restriction to bundles with a typical fiber, being a free graded Weil algebra of order $k$, shall be denoted with $\FGWAB_k^\ast$.
\end{definition}

By taking the graded dual of such an algebra bundle we obtain a certain coalgebra bundle with a typical fiber being a coalgebra dual to a free graded Weil algebra. This motivates the following definition.

\begin{definition}
By a \emph{free graded Weil coalgebra of order $k$} we shall understand a graded vector space $A^\ast$, which is the graded dual to a free graded Weil algebra $A$ of order $k$, equipped with the canonical coalgebra structure dual to the algebra structure on $A$.

The \emph{category of free graded Weil coalgebra bundles}, denoted $\FGWCB$, will be understood as the category whose objects are graded colgebra bundles modeled on a free graded Weil coalgebra, and whose morphisms are standard morphisms of the related vector bundle structures, respecting the graded coalgebra structures on fibers. Its restriction to bundles with a typical fiber, being a free graded Weila coalgebra of order $k$, shall be denoted by $\FGWCB_k$.
\end{definition}

Clearly, categories $\FGWAB\ast$ and $\FGWCB$, as well as $\FGWAB^\ast_k$ and $\FGWCB_k$, are isomorphic by means of the operation of graded duality (cf. Proposition \ref{prop:star_alg_coalg}).

In the light of our considerations from Subection \ref{ssec:dual_gr space_k} (see Lemma \ref{lem:star_star_k}), it is clear that $\kGdualfunctor{k}$ is a functor from the category $\GB_k$ of graded bundles of degree $k$ to the category $\FGWAB_k^\ast$ of free graded Weil algebra bundles of order $k$, while $\kalgdualfunctor{k}$ is a functor from the category $\FGWAB_k^\ast$ to the category $\GB_k$. The pair of these functors establishes an equivalence between these categories, leading to another concept of the \emph{duality} of graded bundles.

The following result is an analog of Theorem \ref{thm:duality}.

\begin{theorem}\label{thm:k_duality}
For each $k$, the pair of functors $\kGdualfunctor{k}$ and $\kalgdualfunctor{k}$ establishes an equivalence of categories $\GB_k$ and $\FGWAB_k^\ast$.
\end{theorem}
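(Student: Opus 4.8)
The plan is to mimic the proof of Theorem~\ref{thm:duality}, reducing the bundle-level statement to the fiberwise equivalence already recorded in Lemma~\ref{lem:star_star_k}. The essential simplification compared with the $\mbA$-case is that every fiber is now \emph{finite-dimensional}: each fiber of $\kGdual{F}{k}$ is the free graded Weil algebra $\kpolyA{k}(\R^\dd)=\cA(\R^\dd)/\cA^{>k}(\R^\dd)$, so all the bundles in sight are honest finite-rank (graded) vector bundles and no locally-finite-rank machinery is needed.

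First I would check that $\kGdualfunctor{k}$ is a well-defined contravariant functor $\GB_k\to\FGWAB_k^\ast$, exactly as in Proposition~\ref{prop:duality}. On objects this is the content of the preceding Proposition together with Lemma~\ref{lem:star_star_k}: the fibers $\kGdual{(F_x)}{k}$ are free graded Weil algebras of order $k$, and local triviality of $F$ (Theorem~\ref{theorem:1}) transports to local triviality of $\kGdual{F}{k}$ through the coordinate description $\kGdual{F}{k}\simeq\R[y^a_w]/I_k$, with transition cocycles being graded-algebra automorphisms. On morphisms, a graded bundle morphism $f:F\to E$ over $\ul{f}:M\to N$ dualizes fiberwise to graded-algebra homomorphisms $\kGdual{(f_x)}{k}:\kGdual{(E_{\ul{f}(x)})}{k}\to\kGdual{(F_x)}{k}$; collecting these across the finitely many degrees $i=0,\dots,k$ yields a family of Zakrzewski morphisms of finite-rank vector bundles over the common base map $\ul{f}$, hence a Zakrzewski morphism $\kGdual{f}{k}$ respecting the fiberwise algebra structure. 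Symmetrically, $\kalgdualfunctor{k}$ sends a free graded Weil algebra bundle $\bm{A}$ to the graded bundle $\kalgdual{\bm{A}}{k}$ (fibers are graded spaces of degree $\le k$ by Lemma~\ref{lem:star_star_k}) and a Zakrzewski morphism $\phi:\bm{B}\relto\bm{A}$ to a standard graded bundle morphism $\kalgdual{\phi}{k}$.

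Next I would establish the equivalence itself through the evaluation maps. The fiberwise isomorphisms $\ev_{F_x}$ and $\ev_{\bm{A}_x}$ of Lemma~\ref{lem:star_star_k} are defined in a coordinate-free way, so they assemble into bundle isomorphisms $\ev_F:F\to\kalgdual{(\kGdual{F}{k})}{k}$ and $\ev_{\bm{A}}:\bm{A}\to\kGdual{(\kalgdual{\bm{A}}{k})}{k}$; their smoothness again follows from the coordinate description above. Being canonical, these are natural in $F$ and in $\bm{A}$, i.e. they furnish natural isomorphisms $\id_{\GB_k}\cong\kalgdualfunctor{k}\circ\kGdualfunctor{k}$ and $\id_{\FGWAB_k^\ast}\cong\kGdualfunctor{k}\circ\kalgdualfunctor{k}$, which is precisely the asserted equivalence of categories.

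I expect the main obstacle to be the smoothness and local-triviality bookkeeping of the first step, namely verifying that the fiberwise-defined dual carries a genuine smooth free graded Weil algebra bundle structure and that the dualized map is a smooth Zakrzewski morphism. This is handled by reading everything off the local coordinate presentation $\kGdual{F}{k}\simeq\R[y^a_w]/I_k$, which also explains why restricting to degree $\le k$ loses no information: the generators $y^a_w$ all have weight $\le k$, so $F$ is fully recovered from its truncated dual, exactly as needed for $\ev_F$ to be an isomorphism.
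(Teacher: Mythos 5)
Your proposal is correct and follows the same route as the paper, which simply states that the theorem follows directly from the fiberwise result of Lemma~\ref{lem:star_star_k} (exactly as Theorem~\ref{thm:duality} follows from Lemma~\ref{lem:duals_to_graded_space}). The additional bookkeeping you supply --- local triviality via the presentation $\R[y^a_w]/I_k$, assembly of the evaluation maps into natural bundle isomorphisms, and the observation that everything is finite-rank --- is a faithful elaboration of what the paper leaves implicit.
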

\noindent The proof follows directly from an analogous result for graded spaces (cf. Lemma \ref{lem:star_star_k}).

Also Theorem \ref{thm:duality1} has its direct analog for $\thh{k}$-duality.

\begin{theorem}\label{thm:k_duality1}
For each natural $k$, the functor $\kodualfunctor{k}$, defined as the composition of $\kGdualfunctor{k}:\GB_k\ra\FGWAB_k^\ast$ with the graded duality functor $\ast:\FGWAB_k^\ast\ra\FGWCB_k$, establishes an equivalence  of the categories $\GB_k$ and $\FGWCB_k$.
\end{theorem}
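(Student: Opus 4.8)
The plan is to prove the statement purely formally, by recognizing $\kodualfunctor{k}$ as a composition of two maps each of which we have already understood: an equivalence and an isomorphism of categories. By Theorem \ref{thm:k_duality}, the functor $\kGdualfunctor{k}:\GB_k\ra\FGWAB_k^\ast$ is (one half of) an equivalence of categories, with quasi-inverse $\kalgdualfunctor{k}$; the required natural isomorphisms $\kalgdualfunctor{k}\circ\kGdualfunctor{k}\simeq\id$ and $\kGdualfunctor{k}\circ\kalgdualfunctor{k}\simeq\id$ are supplied fiberwise by the evaluation maps \eqref{eqn:V_star_star_k} and \eqref{eqn:A_star_star_k} of Lemma \ref{lem:star_star_k}. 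On the other side, the graded-duality functor $\ast:\FGWAB_k^\ast\ra\FGWCB_k$ is, as noted just above, an isomorphism of categories (the $\thh{k}$-order analog of Proposition \ref{prop:star_alg_coalg}). Since a composition of an equivalence with an isomorphism of categories is again an equivalence, and $\kodualfunctor{k}=\ast\circ\kGdualfunctor{k}$ by definition, the theorem follows; an explicit quasi-inverse is $\kalgdualfunctor{k}\circ(\ast)^{-1}$, where $(\ast)^{-1}:\FGWCB_k\ra\FGWAB_k^\ast$ is the inverse graded-duality functor.

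The only point deserving genuine attention is the claim that $\ast$ is an honest isomorphism of categories in the $\thh{k}$-order setting, so I would spell this out along the lines of Proposition \ref{prop:star_alg_coalg}. On objects, graded duality sends a free graded Weil algebra bundle to its dual, which by the definition of a free graded Weil coalgebra is precisely an object of $\FGWCB_k$; the assignment is invertible because, for locally finite-dimensional graded vector spaces, the canonical double-dual identification $W^{\ast\ast}\simeq W$ and the compatibility \eqref{eqn:tensor_product} of $\ast$ with tensor products let one recover the algebra (resp. coalgebra) structure from its dual. On morphisms, Definition \ref{def:morphisms_inf_dim} gives the bijective correspondence between a Zakrzewski morphism $\psi:\bm{A}\relto\bm{B}$ and the standard morphism $\psi^\ast:\bm{B}^\ast\ra\bm{A}^\ast$ of the dual bundles, and under this correspondence the fiberwise algebra-homomorphism condition dualizes to the fiberwise coalgebra-homomorphism condition; one also checks that cocycles of graded-algebra automorphisms dualize to cocycles of graded-coalgebra automorphisms, so the bundle structure is preserved. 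Functoriality and invertibility of $\ast$ are then immediate from fiberwise finite-dimensional linear duality.

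I do not expect any serious obstacle here: all the substantive work has been carried out in Theorem \ref{thm:k_duality} and in the fiberwise duality theory of Section \ref{sec:fin_gr_bund}, and the present statement is a formal consequence. The mildest care required is simply to confirm that taking graded duals stays within the class of \emph{free} graded Weil (co)algebra bundles of order $k$ and restricts correctly to the degree-$k$ subcategories, which is clear since $\ast$ acts fiber by fiber and does not change the base $M$, the typical fiber's order, nor the covering map. Hence $\kodualfunctor{k}$ establishes the asserted equivalence between $\GB_k$ and $\FGWCB_k$.
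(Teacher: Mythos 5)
Your proposal is correct and follows exactly the route the paper intends: the paper gives no explicit proof, treating the theorem as an immediate consequence of Theorem \ref{thm:k_duality} composed with the isomorphism $\FGWAB_k^\ast\simeq\FGWCB_k$ of Proposition \ref{prop:star_alg_coalg} (in its $\thh{k}$-order form). Your extra paragraph spelling out why graded duality is an honest isomorphism of categories merely makes explicit what the paper declares ``straightforward.''
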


\subsection{{An application: new characterization of graded bundles of degree $k$}}
As we have seen (Theorem~\ref{thm:k_duality}) the structure of a
graded bundle of degree $k$ is completely encrypted by the
structure of order $k$ free graded Weil algebra bundle
structure on the associated  graded vector space of degree $k$, denoted
by  $\kGdual{F}{k}$. That is to say, a graded bundle of degree $k$
over a manifold $M$ is uniquely determined by a graded vector space $E =
\oplus_{j=0}^k E^j$ with $E^0 = \R$ (over $M$) and a multiplication map $m:
\Sym^2 E \to E$ provided by a family of vector bundle morphisms
$$
m_{i,i'}: E^i\otimes E^{i'} \to E^{i+i'}
$$
such that  $m_{0, i}$ coincides with the canonical isomorphism
$\R\otimes E^i \simeq E^i$ and that the multiplication $m=
(m_{i, i'})_{0 \leq i, i'\leq k}$ provides  $E$ with a structure of a free
 Weil algebra bundle of order $k$. Let us briefly see what it means in
low rank examples.

In case $k=2$ such a structure is completely determined by the multiplication $m_{1,1}:E^1\otimes E^1\ra E^2$, as all other products $m_{i,j}$ are trivial. The multiplication $m_{1,1}$ should be symmetric and shouldn't be a subject of any relation of degree 2, as the related Weil algebra should be free of order 2. This leads to the following result.

\begin{theorem}\label{thm:deg_2_gr_bndls}
The structure of a graded bundle of
degree $2$ is encrypted in an injective vector bundle morphism
$$\widehat{m}_{1,1}:\Sym^2 E^1 \to E^2\ .$$
Dually, we have a surjective vector bundle morphism
$$(\widehat{m}_{1,1})^\ast:C^2 \to \Sym^2 C^1\ ,$$
where $C^i = (E^i)^*$.
\end{theorem}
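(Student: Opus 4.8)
The plan is to invoke the equivalence of categories $\GB_2\simeq\FGWAB_2^\ast$ from Theorem~\ref{thm:k_duality}, which turns the statement into a fibre-wise algebraic assertion about free graded Weil algebras of order $2$; the bundle version is then obtained by carrying out the construction smoothly over $M$. Write $E=\kGdual{F}{2}=E^0\oplus E^1\oplus E^2$ with $E^0=\R$ (the trivial line bundle, by connectedness). The only nontrivial piece of the multiplication is $m_{1,1}:E^1\otimes E^1\to E^2$, since $m_{0,i}$ are the canonical unit isomorphisms and $m_{1,2},m_{2,2}$ must vanish (their targets would sit in degrees $3,4>2$). Commutativity makes $m_{1,1}$ symmetric, so it factors through $E^1\otimes E^1\to\Sym^2 E^1$ and defines $\widehat{m}_{1,1}:\Sym^2 E^1\to E^2$.

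The heart of the argument is the pointwise equivalence: the algebra $A=\R\oplus A^1\oplus A^2$ with the above multiplication is a free graded Weil algebra of order $2$ if and only if $\widehat{m}_{1,1}$ is injective. For the forward implication, any element of $\ker\widehat{m}_{1,1}$ is a nonzero graded-homogeneous quadratic polynomial in the degree-$1$ generators that vanishes in $A$; since its degree is $2\le 2$, Definition~\ref{def:free_weil_alg} forbids this, forcing $\ker\widehat{m}_{1,1}=0$. For the converse, starting from an injective $\widehat{m}_{1,1}$ I would reconstruct $A$ by declaring $N_A^2=A^1\cdot A^1=\operatorname{im}\widehat{m}_{1,1}\subseteq A^2$ (all other products into $A^2$ being zero), taking a basis $y^a$ of $A^1$ and a basis $z^c$ of a complement of $\operatorname{im}\widehat{m}_{1,1}$ in $A^2$ as the homogeneous free Weil generators. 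A homogeneous polynomial of degree $\le 2$ in them has the form $\sum_{a,b}c_{ab}\,y^a y^b+\sum_c d_c\,z^c$, and it vanishes in $A$ exactly when $\widehat{m}_{1,1}\!\big(\sum c_{ab}[y^a y^b]\big)+\sum_c d_c z^c=0$; since the $z^c$ span a complement to $\operatorname{im}\widehat{m}_{1,1}$ and $\widehat{m}_{1,1}$ is injective, all coefficients vanish, so no relation of order $\le 2$ survives and $A$ is free of order $2$.

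Granting this dictionary, the bundle statement follows immediately: by Theorem~\ref{thm:k_duality} a graded bundle of degree $2$ over $M$ is the same datum as a free graded Weil algebra bundle of order $2$ on $E$, and by the pointwise equivalence this is precisely an injective vector bundle morphism $\widehat{m}_{1,1}:\Sym^2 E^1\to E^2$ (injectivity being a fibre-wise condition preserved under the smooth local trivialisations). Finally, the dual statement is formal: dualising fibre-wise and using the canonical identifications $(E^2)^\ast=C^2$ and $(\Sym^2 E^1)^\ast\cong\Sym^2 C^1$ (valid over $\R$), the transpose of a fibre-wise injective bundle morphism between finite-rank bundles is fibre-wise surjective, so $(\widehat{m}_{1,1})^\ast:C^2\to\Sym^2 C^1$ is onto.

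I expect the only real obstacle to be the converse half of the pointwise equivalence, namely checking that an injective $\widehat{m}_{1,1}$ genuinely yields an algebra that is \emph{free} of order $2$ in the sense of Definition~\ref{def:free_weil_alg}. The delicate point there is the correct choice of homogeneous free Weil generators --- a basis of $E^1$ together with a basis of $E^2/\operatorname{im}\widehat{m}_{1,1}$ --- and the verification that no quadratic relation among them remains; once this is in place, the passage from fibres to bundles and the dualisation are routine.
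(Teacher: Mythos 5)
Your proposal is correct and follows essentially the same route as the paper, which derives the theorem in the paragraph immediately preceding it by invoking the equivalence with free graded Weil algebra bundles of order $2$ and observing that $m_{1,1}$ is the only nontrivial product, must be symmetric, and is subject to no quadratic relation. The paper leaves the pointwise dictionary ``free of order $2$ $\Leftrightarrow$ $\widehat{m}_{1,1}$ injective'' implicit; your careful verification of both directions, including the choice of generators as a basis of $E^1$ together with a complement of $\operatorname{im}\widehat{m}_{1,1}$ in $E^2$, is exactly the detail the paper omits.
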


Note that the above characterization of the graded bundles of degree 2 corresponds to that given by Bursztyn, Cattaneo, Mehta \& Zambon as announced in \cite{Carpio-Marek:2015}.

Let us now discuss the case $k=3$. Consider a basis $(y^i)$ of $E^1$. The
products $y^i y^j$ should be linearly independent in $E^2$, as our Weil
algebra is assumed to be free of order $3$. We can complete elements $(y^iy^j)_{i\leq j}$ to a basis of $E^2$ by adding  some elements $(z^\mu)$. Now, again by the assumption that the Weil algebra is free of order 3, the elements $\{y^iy^jy^k, y^l z^\mu\}$, where $i\leq j\leq k$,
should be linearly independent in $E^3$. This means that, the rank of the
image of the vector bundle morphism $m_{1, 2}: E^1\otimes E^2 \to E^3$ should
be  maximal, i.e., it should be equal to $\binom{d_1}{3} + 2\cdot \binom{d_1}{2}+d_1+ d_1 d_2$,
where $d_i = \operatorname{rank} E^i$.

For higher $k$ it is also possible to formulate similar rank conditions on $m_{i,j}$'s, however they become very complex as $k$ groves.

A much simpler characterization can be provided after an introduction of the vector bundle
\begin{equation}\label{eqn:E_gen}
\hat{E}:= E_0/(E_0\cdot E_0)
\end{equation}
where $E_0 = \bigoplus_{i>0} E^i$.
Note that  the quotient $(\Gdual{F})_0/((\Gdual{F})_0)^2$ is finite-dimensional and its representants are just graded generators of $\Gdual{F}$. Let us focus our attention again on the case $k=3$.  Consider the multiplication map $m: \Sym^2 E_0 \ra E_0$ composed with the projection $E_0 \ra E_0/(E_0 \cdot E_0\cdot E_0)$. Clearly, $(E_0\cdot E_0)\otimes E_0$ is in its kernel hence the above composition factorizes to a well-defined vector bundle morphism
$$
m^2: \Sym^2 \hat{E} \ra E_0/(E_0)^3.
$$
The condition that $m^2$ is injective is equivalent to the fact that there is no quadratic relation between generators of the prototype of a free graded Weil algebra $E = \R \oplus E^1 \oplus E^2 \oplus E^3$. (For $k=2$ the injectivity of $m^2$ is equivalent to the  injectivity of $m_{1,1}: \Sym^2 E^1\ra E^2$.) To get a sufficient condition for $E$ being a free Weil algebra of order $3$ we should assure that there is no cubic relation between the generators as well. It is easily expressed by the injectivity of a analogously well-defined vector bundle morphism
$$
m^3: \Sym^3 \hat{E} \ra E_0/(E_0)^4.
$$
A generalization to an arbitrary degree $k$ is straightforward and can be formulated as follows.
\begin{theorem} \label{cn-characterization}
A graded bundle of degree $k$ over $M$ is uniquely determined by an associative graded vector bundle morphism $m: \Sym^2 E_0 \ra E_0$ such that the induced morphisms
$$
m^j: \Sym^j \hat{E} \ra E_0/(E_0)^{j+1}
$$
are injective for $j=2,3, \ldots,k$. Here $E_0 = \bigoplus_{j=0}^k E^j\ra M$ is a graded vector bundle of degree $k$.

\end{theorem}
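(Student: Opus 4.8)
The plan is to use the duality of Theorem~\ref{thm:k_duality} to reduce the statement to a fiberwise algebraic fact, and then prove that fact by comparing $A=\R\oplus E_0$ with the polynomial algebra on its generators. First I would record that, by Theorem~\ref{thm:k_duality}, a graded bundle of degree $k$ is the same datum as its $\thh{k}$-dual $\kGdual{F}{k}$, a free graded Weil algebra bundle of order $k$, whose fibers are free graded Weil algebras of order $k$. Such a bundle is a graded vector bundle $A=\R\oplus E_0$, with $E_0=\bigoplus_{i>0}E^i$ the augmentation ideal of \eqref{eqn:E_gen}, carrying a fiberwise commutative, associative and unital multiplication---equivalently a symmetric associative $m\colon\Sym^2 E_0\to E_0$ with $m_{0,i}$ the canonical identification $\R\otimes E^i\simeq E^i$. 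Because $E_0$ has finite rank and $(E_0)^{k+1}\subseteq\bigoplus_{w>k}E^w=0$, the ideal $E_0$ is nilpotent, so Lemma~\ref{lem:pres_weil_algebra} already makes each fiber a graded Weil algebra of order $\le k$. As everything is fiberwise and the transition cocycles preserve the graded algebra structure, the theorem reduces to the pointwise claim: $A=\R\oplus E_0$ is \emph{free} of order $k$ (Definition~\ref{def:free_weil_alg}) if and only if each $m^j$ is injective on weights $\le k$, for $j=2,\dots,k$.

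To handle the pointwise claim I would put $\hat{E}=E_0/(E_0)^2$, choose homogeneous lifts $y^1,\dots,y^N$ of a basis, and form the canonical surjective graded algebra homomorphism $\Phi\colon\Sym(\hat{E})\to A$ from the polynomial algebra on the $y^i$. Freeness of order $k$ is, by definition, the statement that $\Phi$ is injective in weights $\le k$, i.e. that the monomials in the $y^i$ of weight $\le k$ are linearly independent in $A$. The key observation is that $m^j$ is the associated graded of $\Phi$ for the polynomial-degree filtration on $\Sym(\hat{E})$ and the $E_0$-adic filtration on $A$: one has $\Phi(\Sym^{\ge j}\hat{E})\subseteq (E_0)^j$, and passing to quotients recovers precisely the surjection $m^j\colon\Sym^j\hat{E}\to (E_0)^j/(E_0)^{j+1}$. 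Since $m^1=\id_{\hat{E}}$ is automatically injective, the condition starts at $j=2$; and since $A$ carries no elements of weight $>k$, injectivity of $m^j$ is read in the weight-$\le k$ range, exactly as Definition~\ref{def:free_weil_alg} permits relations of weight $>k$.

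The equivalence then follows from a leading-term argument. For sufficiency, assuming $m^2,\dots,m^k$ injective and given a relation $\sum_\alpha c_\alpha y^\alpha=0$ of weight $w\le k$, I would take $j_0$ to be the least polynomial degree carrying a nonzero coefficient; reducing the relation modulo $(E_0)^{j_0+1}$ kills the higher-degree monomials and yields $m^{j_0}(\sum_{\deg\alpha=j_0}c_\alpha[y^\alpha])=0$, so injectivity of $m^{j_0}$ (with $m^1=\id$ if $j_0=1$) forces those coefficients to vanish, contradicting minimality; hence $A$ is free. For necessity, assuming $A$ free and taking $\xi\in\Sym^j\hat{E}$ of weight $\le k$ with $m^j(\xi)=0$, I would lift $\xi$ to a polynomial-degree-$j$ homogeneous $\tilde{\xi}$; then $\Phi(\tilde{\xi})\in(E_0)^{j+1}=\Phi(\Sym^{\ge j+1}\hat{E})$, so $\Phi(\tilde{\xi})=\Phi(\eta)$ for some $\eta$ of the same weight and polynomial degree $\ge j+1$, whence $\tilde{\xi}-\eta\in\ker\Phi$ has weight $\le k$ and vanishes by freeness; comparing polynomial degrees gives $\tilde{\xi}=0$, i.e. $\xi=0$. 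Globalizing over $M$ and invoking Theorem~\ref{thm:k_duality} on the locus cut out by these injectivity conditions then yields the asserted bijective (``uniquely determined'') correspondence.

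The hard part, I expect, will be the associated-graded comparison of the middle step: injectivity of the \emph{leading terms} $m^j$ is not in general equivalent to injectivity of the full homomorphism $\Phi$ for an arbitrary filtered map, and here it is rescued by the polynomial-degree grading of $\Sym(\hat{E})$ together with the minimal-degree selection in the sufficiency argument. A secondary but genuine subtlety is the weight bookkeeping: products of total weight $>k$ vanish identically in $A$, so all the injectivity conditions must be interpreted in the weight-$\le k$ range, in accordance with the definition of freeness.
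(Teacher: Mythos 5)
Your proposal is correct and follows essentially the same route as the paper: reduce via Theorem~\ref{thm:k_duality} to the fiberwise statement that $A=\R\oplus E_0$ is a free graded Weil algebra of order $k$, and then identify freeness (absence of relations of weight $\le k$ among homogeneous generators) with injectivity of the associated-graded maps $m^j$. The paper only sketches this last equivalence through the cases $k=2,3$ and declares the generalization straightforward; your leading-term/associated-graded argument, and your explicit remark that injectivity must be read on the weight-$\le k$ part of $\Sym^j\hat{E}$, supply exactly the details the paper leaves implicit.
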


\begin{remark}
It is well known that the cotangent space to a manifold $N$ at a fixed point $q\in N$ can be understood as the quotient space $I_q/I_q^2$, where $I_q$ is the ideal of smooth functions on $N$ vanishing at a given point $q$. Thus if $E=\kGdual{F}{k}$, then $\hat{E} \simeq \V^*_M F$ coincides with the dual to the vertical subbundle $\V_M F\ra M$ along $M\subset F$. On the other hand, the bundles $\hat{E}$ and the split form of $F\ra M$ are in natural duality, hence the bundle $\hat{E}$
determines the isomorphic type of $F$  (cf. Remark \ref{split}).
\end{remark}

\section{Duality for double graded bundles}
\subsection{Double graded bundles.}
Double vector bundles carry two vector bundle structures which are compatible in a sense {that will be given below}. They locally look like {the product} $A\times B\times C$, where $A, B, C$ are vector bundles over $M$, and the vector bundle structures are the pullbacks of the two vector bundles $B\oplus C$ and $A\oplus C$ with respect to the bundle projections $A\ra M$, and $B\ra M$, respectively. The typical examples are iterated tangent-cotangent bundles $\T \T^\ast M\simeq \T^\ast \T M$, the tangent bundle $\T E$ of a vector bundle $E\ra M$, etc. The double vector bundles are especially important in the context of geometric mechanics (e.g. \cite{Tulczyjew-book, Pradines,MJ_MR_higher_var_calc_2014})  and generalizations of Lagrangian and Hamiltonian formalisms \cite{Gr06,GG}.
Double vector bundles also reveal a beautiful duality theory discovered in \cite{KU} (see also \cite{Mac})  and lead to the concept of a \emph{generalized algebroid} \cite{GU97,GU99}.

In this section we shall exploit another aspect of duality for double vector bundles and its generalization -- double graded bundles.
Following Pradines \cite{Pradines} (cf. also \cite{Mac}), we recall a categorical definition.
\begin{definition}\label{def:DVB}
A \emph{double vector bundle} $(D; A, B; M)$
is a commuting diagram of four vector bundles
$$
\xymatrix{ D\ar[r]^{\pi^D_B} \ar[d]_{\pi^D_A} & B \ar[d] \\
A\ar[r] & M }
$$
such that each of the structure maps of each bundle structure on
$D$ (the bundle projection, the zero section, the addition, and the scalar
multiplication) is a morphism of vector bundles with respect to
the other structure. The \emph{core} of $(D; A, B; M)$ is defined as
$$
C = \ker \pi^D_A \cap \ker \pi^D_B,
$$
which is {naturally} a vector bundle over $M$.
\end{definition}
The above conditions can be simplified to just a single one, saying that homotheties in the vector bundles $D\ra A$ and $D\ra B$ {commute}  \cite{GR09}. Replacing a vector bundle structure with a graded bundle structure, we arrive at the following {generalization} \cite{GR12}.
\begin{definition}\label{def:DGrB}
A \emph{double graded bundle} is a manifold $D$ equipped with two commuting actions
$$
h^i: \R \times D \ra D, \quad i=1,2,
$$
of the multiplicative monoid of real numbers, $h^1_t\circ h^2_s =h^2_s\circ h^1_t$ for any reals $t, s$.
\end{definition}
It can be proved \cite{GR12} that a double graded bundle $\pi: D\ra M$ admits an atlas of local trivializations in which fiber coordinates have assigned bi-weights $w\in \N\times \N$, $w\neq (0,0)$, and {such that the} associated transition {functions} preserve {these bi-weights}. The degree of a double graded bundle is a pair of integers $(k,l)\in\N^2$ telling us that corresponding graded bundles are of degrees $k$ and $l$, respectively.

\subsection{Duality}
For the category of double graded bundles we can obtain, {proceeding per analogy to our previous considerations}, an equivalence with
the category of bi-graded polynomial algebra bundles, etc. Thus to define a double graded bundle of a bi-degree $(k,l)$ it amounts {to give} the dual object, which is a bi-graded
vector bundle of the form
$$
E = \bigoplus_{i=0}^k \bigoplus_{j=0}^l E^{i, j}\,,
$$
with $E^{0,0}=\R$,
a structure of a graded associative commutative algebra, which is \emph{free} in the sense that
a bi-homogenous polynomial is zero in the algebra $E$ if and only if it is of bi-weight $(k', l')$ with $k'>k$ or $l'>l$. In other words, $E$ should be a free Weil algebra of order
$k$ (respectively, $l$) with respect to the first (respectively, the second) component of the bi-gradation.

In particular, we get a {direct} analog of Theorem \ref{cn-characterization}. The vector bundle $\hat{E}$ and induced morphisms $m^j: \Sym^j \hat{E} \ra {E_0}/(E_0)^{j+1}$ are defined by exactly the same formulas as in the case of $\N$-gradation, where $E_0$ is just $E$ without the  component of weight $(0,0)$.

\begin{theorem}\label{cn-characterization1}
A double graded bundle of bi-degree $(k,l)$ over $M$ can be equivalently defined as a  bi-graded vector bundle map
 $$m: \Sym^2 E_0 \ra E_0$$
which is associative in an obvious sense and such that the induced vector bundle morphisms
$m^j$ are injective.

In particular, a double vector bundle over $M$ is defined by an injective
 vector bundle map
\begin{equation}
E^{1,0}\ot E^{0,1} \ra E^{1,1}.
\end{equation}
where $E^{1, 1}, E^{1, 0}, E^{0,1}$ are some vector bundles over $M$. Dually, we get a
surjective vector bundle map
\begin{equation}\label{eqn:DVB_short_sequence}
C^{1,1}\to C^{1,0}\ot C^{0,1}.
\end{equation}
where $C^{i,j}\simeq (E^{i,j})^*$.
\end{theorem}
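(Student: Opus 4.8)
The plan is to deduce the statement from the bi-graded analogue of the duality established in Theorems~\ref{thm:k_duality} and~\ref{thm:k_duality1}, exactly as Theorem~\ref{cn-characterization} was deduced in the $\N$-graded case. First I would invoke the equivalence, valid per analogy with the single-grading situation, between double graded bundles of bi-degree $(k,l)$ over $M$ and bi-graded free Weil algebra bundles $E=\bigoplus_{i,j}E^{i,j}$ with $E^{0,0}=\R$. Under this equivalence a double graded bundle structure is nothing but a bi-graded commutative associative algebra bundle structure on $E$, given by a symmetric map $m:\Sym^2 E_0\ra E_0$, which is moreover \emph{free} of bi-order $(k,l)$: a bi-homogeneous polynomial in the generators vanishes in $E$ if and only if its bi-weight $(k',l')$ satisfies $k'>k$ or $l'>l$. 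Thus the entire content reduces to recognizing, among such algebra bundles, those that are free.

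For the freeness criterion I would repeat the argument preceding Theorem~\ref{cn-characterization}, now with bi-weights. Setting $\hat E=E_0/(E_0)^2$ for the bundle of generators, the multiplication $m$ induces for each $j$ a well-defined vector bundle morphism $m^j:\Sym^j\hat E\ra E_0/(E_0)^{j+1}$, because $(E_0)^2\cdot E_0$ lies in its kernel. A genuine relation of length $j$ among the generators is precisely a nonzero element of $\ker m^j$, so the absence of relations, order by order in the $(E_0)$-adic filtration, is equivalent to the injectivity of all the $m^j$. Concretely, injectivity of the $m^j$ says that the associated graded algebra $\bigoplus_j (E_0)^j/(E_0)^{j+1}$ is the free symmetric algebra $\Sym(\hat E)$ cut off at the admissible bi-weights, which is exactly freeness of $E$. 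The only adaptation from the $\N$-graded case is bookkeeping: the filtration degree $j$ counts generator factors, whereas the truncation is phrased in terms of bi-weight $(k',l')$, so one must check that the two gradings are compatible and that the vanishing forced by $k'>k$ or $l'>l$ is the sole source of kernel. I expect this compatibility check, disentangling the total-degree filtration used to define $m^j$ from the bi-weight grading used to define freeness, to be the main (and essentially the only) obstacle; everything else is identical to the single-grading proof.

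Finally I would specialize to $(k,l)=(1,1)$ to obtain the double vector bundle statement. Here $E=\R\oplus E^{1,0}\oplus E^{0,1}\oplus E^{1,1}$, and the products $E^{1,0}\cdot E^{1,0}$ and $E^{0,1}\cdot E^{0,1}$ carry the forbidden bi-weights $(2,0)$ and $(0,2)$, hence vanish; the sole nontrivial component of $m$ is $E^{1,0}\ot E^{0,1}\ra E^{1,1}$. In bi-weight $(1,1)$ the map $m^2$ reduces to exactly this component, so its injectivity is the asserted injectivity of $E^{1,0}\ot E^{0,1}\ra E^{1,1}$. Passing to graded duals, $C^{i,j}\simeq(E^{i,j})^\ast$, turns this injection into the surjection \eqref{eqn:DVB_short_sequence}, namely $C^{1,1}\ra C^{1,0}\ot C^{0,1}$, which recovers the characterization of double vector bundles due to Chen, Liu \& Sheng.
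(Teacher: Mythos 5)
Your proposal is correct and follows essentially the same route as the paper: the paper likewise reduces the theorem to the bi-graded analogue of the duality with free Weil algebra bundles, characterizes freeness via the injectivity of the induced maps $m^j:\Sym^j\hat E\ra E_0/(E_0)^{j+1}$ exactly as in Theorem~\ref{cn-characterization}, and then specializes to bi-degree $(1,1)$ and dualizes to obtain \eqref{eqn:DVB_short_sequence}. The bookkeeping point you flag (that $\Sym^j\hat E$ must be read bi-gradedly, with the components of inadmissible bi-weight discarded) is real but is glossed over in the paper as well, so your write-up is if anything slightly more careful than the original.
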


Note that this characterization of double vector bundles appeared first in
\cite{Chen:2014}, but the proof there is quite complicated.

\medskip
It is instructive to give an explicit form of the bundle map \eqref{eqn:DVB_short_sequence} representing a double vector bundle structure. Assume we are given a double vector bundle $(D; A, B; M)$ with the core $C$, and say $(q^A, x^a, y^i, z^\mu)$ is a standard local coordinate system on $D$, so that the assigned weights are $(0,0)$, $(1, 0)$, $(0,1)$ and $(1,1)$, {respectively}. We set first $E^{1,0}:= A$, $E^{0,1}:= B$. Consider the $\cC^\infty(M)$-module  $\cA^{1,1}(D)$ of functions of weight $(1,1)$ on $D$. These functions are locally spanned by function $z^\mu$ and $x^a y^i$, so the {considered} $\cC^\infty(M)$-module is locally free, hence it coincides with a space of sections of some vector bundle, say $\bar{D}$. Thus the multiplication map $\cA^{1,0}(D)\otimes \cA^{0,1}(D) \ra \cA^{1,1}(D)$ is a $\cC^\infty(M)$-module morphism and it gives rise to a vector bundle map $A^\ast \otimes B^\ast \ra \bar{D}$. The dual vector bundle morphism $\bar{D}^\ast \ra A\otimes B$ coincides with \eqref{eqn:DVB_short_sequence}.

Another description of \eqref{eqn:DVB_short_sequence} is based on a short exact sequence (see e.g. \cite{MJL})
\begin{equation}\label{eqn:sec_Sec_linear}
0\ra \Sec(B^\ast \otimes C) \ra \Sec^l_B(D) \ra \Sec(A)\ra 0
\end{equation}
associated with a double graded bundle $(D; A, B; M)$ with the core $C$. Here, $\Sec^l_B(D)$ denotes the subspace of $\Sec_B(D)$ of the, so called, \emph{linear sections} of the bundle $D\ra B$, i.e. sections $s: B\ra D$ being a vector bundle morphism at the same time (covering a section $\und{s}\in\Sec_M(A)$).
Since $\Sec^l_B(D)$ is a locally  free $\cC^\infty(M)$-module, there is a vector bundle $\widehat{D}\ra M$, sometimes called the \emph{fat vector bundle defined by $\Sec^l_B(D)$}, such that $\Sec^l_B(D)$ is isomorphic with $\Sec(\hat{D})$ as {a} $\cC^\infty(M)$-module. Thus the short exact sequence \eqref{eqn:sec_Sec_linear} corresponds to a short exact sequence of vector bundle morphisms
$$
0\ra B^\ast \otimes C \ra \hat{D} \ra A \ra 0.
$$
If we take as a starting point not $(D; A, B; M)$, but its dual over $B$, i.e. $(D^{\ast}_B; C^\ast, B; M)$, then the above sequence would read as
$$0\ra B^\ast \otimes A^\ast \ra \widehat{D^\ast_B} \ra C^\ast \ra 0\,,$$
and its dual coincides with \eqref{eqn:DVB_short_sequence} (up to the flip $A\otimes B \simeq B\otimes A$), with $E^{1,1}$ being the dual of $\widehat{D^\ast_B}$ and the core $C$ being the kernel of the epimorphism \eqref{eqn:DVB_short_sequence}.


\section{Duality for graded supermanifolds}
We can consider supergeometric versions of homogeneity structures, i.e. supermanifolds $\cF$ equip\-ped with an (even) action of the monoid $\GG$ (cf. \cite{Bruce:2014a,MJ_MR_monoids_2016}). We will call them \emph{homogeneity superstructures} or \emph{graded superbundles}.

Particular structures of this type, called \emph{N-manifolds}, were introduced by  Roytenberg \cite{Roy} and \v{S}evera \cite{Sev} (cf. also \cite{Voronov}) and play a prominent role in supergeometry. One of the reasons is that various important objects in mathematical physics can be seen as N-manifolds equipped with an odd homological vector field. For example, a Lie algebroid is a pair $(E,X)$ where $E$ is an N-manifold of degree $1$, thus an anti-vector bundle, and $X$ is a homological vector field on $E$ of weight $1$. A much deeper result relates Courant algebroids and N-manifolds of degree $2$ (cf. \cite{Roy}). Note that N-manifolds are homogeneity structures in the category of supermanifolds for which the homogeneity degree determines the parity. In other words, $h_{-1}$ coincides with the parity operator \cite{Sev}.

\medskip
For graded superbundles  whose base is an ordinary {even} manifold we can develop the concept of duality completely parallel to that for graded bundles: we must just impose super-commutation rules. Note that we deal with two compatible gradings: an $\N$-grading given by the homogeneity superstructure $h:F\ti\R\ra F$ and the $\Zet_2$-grading represented by the parity operator \newMR{$\zb:F\ra F$}, but the weight does not, in general, determine the parity (cf. \cite{Voronov}). The compatibility simply means that these gradings commute ({$h_t$} commutes with $\zb$ for any $t\in \R$), giving rise to a bi-grading $\N\ti\Zet_2$. This implies that each space $\cA^i(F)$ of homogeneous functions of weight $i$ splits into the odd and the even part,
$$\cA^i(F)=\cA^{i,\bar{0}}(F)\oplus\cA^{i,\bar{1}}(F)\,.$$
In consequence, the local model for $F$ is $M\ti\R^{\ddd}$, { where $\ddd = (d_{\bar{0}, 1}, d_{\bar{0}, 2}, \ldots, d_{\bar{0}, k}| d_{\bar{1},1}, d_{\bar{1},2}, \ldots, d_{\bar{1}, k})$ and  $\R^{\ddd}=\R^{d_{\bar{0}}|d_{\bar{1}}}$ as supermanifolds with $d_\za := \sum_i d_{\za, i}$. Besides,  $\R^{\ddd}$
with coordinates $(y^{a}_{\za, i})$, $\za={\bar{0}},{\bar{1}}$, is equipped with the obvious action of $\GG$:
$$h_t^*(y^{a}_{\za, i}) = t^i \cdot  y^{a}_{\za, i}$$
turning it to a graded superspace.}

The superalgebra $\cA(\R^{\ddd})$ of polynomial functions on $\R^{\ddd}$ is then the tensor product of the subalgebras:  the polynomial algebra
$$\cA_{\bar{0}}(\R^{\ddd})=\R[y^{a}_{\bar{0},i}]$$
and the Grassmann algebra
$$\cA_{\bar{1}}(\R^{\ddd})=\zL[y^{a}_{\bar{1},i}]$$
with the super-commutation rules
$$y^{a}_{\za, i}\cdot y^{a'}_{\za', i'}=(-1)^{\za\cdot\za'}y^{a'}_{\za', i'}\cdot y^{a}_{\za, i}.
$$
By a \emph{polynomial superalgebra} we mean a superalgebra being a tensor product of an even polynomial algebra and a Grassman algebra.
We have an analog of Proposition \ref{prop:graded_generators} for polynomial superalgebras.
\begin{proposition}\label{prop:graded_supergenerators} Any connected $\N$-grading compatible with the parity in a polynomial superalgebra
gives a graded superalgebra isomorphic to $\cA(\R^\ddd)$ for some $\ddd=(d_{\bar{0},1} ,d_{\bar{0}, 2},\hdots,d_{\bar{0}, r}|d_{\bar{1}, 1}, d_{\bar{1}, 2},\hdots, d_{\bar{1}, r})$.
\end{proposition}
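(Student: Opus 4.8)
The plan is to follow the proof of Proposition~\ref{prop:graded_generators} line by line, carrying the extra $\Zet_2$-grading (the parity) along at every step, the only genuinely new ingredient being the treatment of the odd, Grassmann part. So I would write $\cA=\R[z^1,\dots,z^N]\ot\zL[\zx^1,\dots,\zx^M]$, with the $z^j$ even and the $\zx^l$ odd, and assume $\cA$ carries a connected $\N$-grading commuting with the parity, so that $\cA$ is bigraded over $\N\ti\Zet_2$ with $\cA^0=\R$. As in the even case, the augmentation ideal $\cI:=\oplus_{i\ge 1}\cA^i$ is a maximal ideal, homogeneous for both gradings, with $\cA/\cI\simeq\R$. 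Since the odd generators are nilpotent and hence lie in every maximal ideal, $\cI$ is the pullback of a maximal ideal of the even reduced algebra $\cA/\langle\zx^1,\dots,\zx^M\rangle\simeq\R[z]$, i.e. it corresponds to a point $p\in\R^N$; after an even, parity-preserving affine shift of the $z^j$ we may assume $\cI=\langle z^1,\dots,z^N,\zx^1,\dots,\zx^M\rangle$.

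Next I would form $V:=\cI/\cI^2$, now a super vector space bigraded over $\N\ti\Zet_2$, whose even part has the classes of $z^1,\dots,z^N$ as a basis and whose odd part has the classes of $\zx^1,\dots,\zx^M$ as a basis. Writing $V=\oplus_i V^i$ with $V^i=V^{i,\bar{0}}\oplus V^{i,\bar{1}}$ and $d_{\za,i}:=\dim V^{i,\za}$, I would, exactly as in the even proof, first choose parity-preserving linear combinations of the generators whose classes modulo $\cI^2$ are homogeneous and form bases of the spaces $V^{i,\za}$, and then correct each such combination by an element of $\cI\cdot\cI$ of the matching bidegree so as to obtain honest bihomogeneous generators $y^a_{\za,i}\in\cA^i$ of parity $\za$. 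A graded Nakayama argument then shows that these generate $\cI$ as an ideal, hence $\cA$ as a superalgebra.

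Assigning to the free generators of $\cA(\R^\ddd)$, with $\ddd=(d_{\bar{0},1},\dots|d_{\bar{1},1},\dots)$, the elements $y^a_{\za,i}$ (respecting weight and parity) produces a surjective homomorphism of graded superalgebras $\Phi:\cA(\R^\ddd)\to\cA$, and it remains to prove that $\Phi$ is injective. Here I would pass to the associated graded algebras for the $\cI$-adic filtration on both sides: for a polynomial superalgebra the associated graded of the augmentation ideal is canonically the free polynomial superalgebra on $\cI/\cI^2$, so that $\mathrm{gr}\,\Phi$ is a surjection of free polynomial superalgebras which restricts, in filtration-degree one, to a linear isomorphism onto $\cI/\cI^2$ (this is where the choice of generators enters). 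A Hilbert-series comparison — both associated graded algebras are free on $N$ even and $M$ odd degree-one generators, hence have series $(1+t)^M/(1-t)^N$ — shows that the surjection $\mathrm{gr}\,\Phi$ is bijective in every degree, whence $\Phi$ itself is an isomorphism and $\cA\simeq\cA(\R^\ddd)$. This route simultaneously replaces and unifies the transcendence-degree argument used for the even generators in Proposition~\ref{prop:graded_generators}: the even part recovers the equality of transcendence degrees, while the odd part records that there are exactly $M$ odd generators subject only to the Grassmann relations.

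The step I expect to be the main obstacle is precisely this odd, Grassmann bookkeeping. In the purely even setting one only needs algebraic independence, read off from the transcendence degree; in the super setting one must instead verify that the chosen odd homogeneous generators are free in the Grassmann sense (no relation beyond the super-commutation relations) and that they introduce no spurious relations when multiplied by the even ones. The correction-to-homogeneous step also requires more care than before, since each class in $V$ must be realised by a genuinely bihomogeneous representative of the prescribed parity; keeping track of parity while clearing the inhomogeneous tails inside $\cI\cdot\cI$ is the technical heart of the argument, and the associated-graded and Hilbert-series comparison is what finally certifies that no hidden relations survive.
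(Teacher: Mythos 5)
Your argument is correct, and it is essentially the only proof available for comparison: the paper states Proposition \ref{prop:graded_supergenerators} without proof, as an ``analog'' of Proposition \ref{prop:graded_generators}, so the implicit intention is exactly the parity-decorated transcription you carry out. Your first three stages (identifying $\cI=\oplus_{i\ge1}\cA^i$ with the augmentation ideal at a real point, using that nilpotents lie in every maximal ideal, splitting $V=\cI/\cI^2$ bihomogeneously, replacing representatives by their bihomogeneous components modulo $\cI^2$, and invoking graded Nakayama) follow the even proof step by step. Where you genuinely diverge is the freeness step at the end: the paper concludes algebraic independence of the even generators by matching their number against the transcendence degree of the field of fractions, an argument that does not literally transfer to a superalgebra (there is no fraction field in the presence of nilpotents, and even generators may involve products of odd ones), whereas you pass to the $\cI$-adic associated graded and compare Hilbert series $(1+t)^M/(1-t)^N$. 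This is a real and welcome strengthening: it treats even and odd generators uniformly, rules out mixed relations, and would in fact also serve as a cleaner justification in the purely even case. The only point I would ask you to make explicit is that deducing injectivity of $\Phi$ from bijectivity of $\mathrm{gr}\,\Phi$ uses separatedness of the $\cI$-adic filtration on the source, i.e. $\bigcap_n \cI^n=0$, which holds for a polynomial superalgebra since $\cI^n$ consists of elements of total degree at least $n-M$ in the generators.
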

Algebras $\cA(\R^\ddd)$ will be called \emph{graded polynomial superalgebras}.

One can prove completely parallel to the even case that the category of graded superbundles with real bases is isomorphic with the category of graded polynomial superalgebra bundles with naturally defined Zakrzewski morphism. Besides, we get at once the result cited in \cite{Carpio-Marek:2015} that $N$-manifolds of degree $2$ can be characterised as surjective vector bundle morphisms $p: \tilde{F}\ra \bigwedge^2 \tilde{E}$ for some vector bundles $\tilde{F}$ and $\tilde{E}$ over $M$. Indeed, we need to construct a free Weil super algebra bundle of order $2$ on the graded vector bundle
$$
E= \R \oplus E^1\oplus E^2
$$
where $E^1$ is an odd and $E^2$ is an even vector bundle over $M$. Thus $E^1 = \Pi \tilde{E}^1$ for some ordinary vector bundle $\tilde{E}^1$ over $M$ and all the structure is determined by the multiplication map $m: E\otimes E \ra E$ which is graded symmetric (in the super sense, the associativity of $m$ is automatic). This means that we have to define an injective vector bundle morphism
$$
\tilde{m}: \bigwedge ^2 \tilde{E}^1 \to E^2
$$
between ordinary real vector bundles. The dual of $\tilde{m}$ is just the map considered in \cite{Carpio-Marek:2015}.

\bibliographystyle{siam}

\bibliography{bibl_dual_graded}

\vskip1cm

 \noindent Janusz Grabowski\\\emph{Institute of Mathematics, Polish Academy of Sciences}\\{\small \'Sniadeckich 8, 00-656 Warszawa,
Poland}\\{\tt jagrab@impan.pl}\\

\noindent Micha\l \ J\'o\'zwikowski\\
\emph{Institute of Mathematics, Polish Academy of Sciences,}\\ {\small \'Sniadeckich 8,  00-656 Warszawa, Poland}\\ {\tt mjoz@impan.pl}\\

\noindent \noindent  Miko\l aj Rotkiewicz\\
\emph{Faculty of Mathematics, Informatics and Mechanics,
                University of Warsaw} \\
               {\small Banacha 2, 02-097 Warszawa Poland} \\
                 {\tt mrotkiew@mimuw.edu.pl} \\

\end{document}